\documentclass[a4paper,leqno,11pt,twoside]{amsart}
\usepackage{amsmath,amsfonts,amssymb,amsthm,amscd}
\usepackage[utf8]{inputenc}
\usepackage[english]{babel}
\usepackage[colorlinks=true,citecolor=blue,urlcolor=blue,linkcolor=blue]{hyperref}
\usepackage[top=1.1in,bottom=1.1in,left=1in,right=1in]{geometry}
\usepackage{graphicx}
\usepackage{tikz-cd}
\usepackage{mathtools}
\usepackage{xcolor}
\usepackage{aliascnt}

\theoremstyle{plain}

\newtheorem{thmIntr}{Theorem}

\newaliascnt{propIntr}{thmIntr}
\newtheorem{propIntr}[propIntr]{Proposition}

\newaliascnt{corIntr}{thmIntr}
\newtheorem{corIntr}[corIntr]{Corollary}

\newaliascnt{lem}{thm}
\newtheorem{lem}[lem]{Lemma}
\aliascntresetthe{lem}

\newaliascnt{cor}{thm}

\aliascntresetthe{cor}

\newaliascnt{prop}{thm}
\newtheorem{prop}[prop]{Proposition}
\aliascntresetthe{prop}

\theoremstyle{definition}

\newaliascnt{rem}{thm}
\newtheorem{rem}[rem]{Remark}
\aliascntresetthe{rem}

\newaliascnt{defn}{thm}

\aliascntresetthe{defn}

\newaliascnt{setup}{thm}
\newtheorem{setup}[setup]{Set-up}
\aliascntresetthe{setup}

\numberwithin{equation}{section}

\def\bC{\ensuremath{\mathbb{C}}}
\def\bP{\ensuremath{\mathbb{P}}}
\def\bQ{\ensuremath{\mathbb{Q}}}
\def\bZ{\ensuremath{\mathbb{Z}}}

\def\cG{\ensuremath{\mathcal{G}}}
\def\cI{\ensuremath{\mathcal{I}}}
\def\cM{\ensuremath{\mathcal{M}}}
\def\cO{\ensuremath{\mathcal{O}}}
\def\cU{\ensuremath{\mathcal{U}}}

\def\wC{\ensuremath{\widetilde{C}}}

\DeclareMathOperator{\Pic}{Pic}
\DeclareMathOperator{\Hom}{Hom}

\DeclareMathOperator{\irr}{irr}
\DeclareMathOperator{\CH}{CH}

\title[Hyperelliptic curves on $K3$ surfaces]{Hyperelliptic curves on $K3$ surfaces}
\author{Federico Moretti}
\hypersetup{
  pdftitle={Hyperelliptic curves on K3 surfaces},
  pdfauthor={Federico Moretti},
  pdfkeywords={K3 surfaces, hyperelliptic curves, Hilbert squares, minimal model program, kernel bundles}
}

\address{Stony Brook University, 100 Nicolls Road, Stony Brook, NY 11794}
\email{federico.moretti@stonybrook.edu}

\begin{document}

\begin{abstract}
	We study hyperelliptic curves on $K3$ surfaces. We show that any number of general points on a $K3$ surface of Picard rank one can be connected by a hyperelliptic curve. As a by-product we recover the Beauville--Voisin decomposition of the small diagonal, together with some new bounds on its torsion order in the integral Chow group. On the arithmetic side, given a $K3$ surface $X$ of geometric Picard rank one defined over a number field $k$, we show that there exists a finite extension $k\subset k'$ such that quadratic points are dense in $X_{k'}$, extending a theorem of Corvaja--Zucconi to $K3$ surfaces of arbitrary degree.
\end{abstract}

\keywords{$K3$ surfaces, hyperelliptic curves, Hilbert squares, small diagonal, Chow groups, minimal model program, kernel bundles, degree of irrationality}
\subjclass[2020]{14E30, 14H51, 14J28}

\maketitle

\section*{Introduction}
\quad The purpose of this note is to show that there are surprisingly many hyperelliptic curves on a $K3$ surface.

\quad Let $X$ be a complex projective $K3$ surface of Picard rank $1$. A classical result by Bogomolov and Mumford \cite{morimukai} states that any complete linear series on $X$ contains a chain of (singular) rational curves. On the primitive linear series of a very general $K3$ surface this rational curve can be chosen nodal by a result of Chen \cite{chenrational}. It follows that any such linear series contains a two-dimensional family of curves of geometric genus $2$, and hence hyperelliptic. Our first result shows that $X$ actually contains many other hyperelliptic curves:

\begin{thmIntr}\label{irrCor}
	Let $X$ be a complex $K3$ surface of Picard rank one. Then any number of general points of $X$ can be connected by a hyperelliptic curve\footnote{Here and throughout, a hyperelliptic curve on $X$ is an irreducible curve whose normalization is hyperelliptic.}.
\end{thmIntr}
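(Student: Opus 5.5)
Our approach is to translate hyperelliptic curves on $X$ into rational curves on the Hilbert square $X^{[2]}$, and then use the rational connectedness properties coming from the birational geometry of $X^{[2]}$. The basic observation is this. Let $f\colon \wC\to X$ be the normalization of an irreducible curve, and let $\iota$ be an involution on $\wC$ with $\wC/\iota\cong\bP^1$. Then $t\mapsto f_*(\text{fibre over }t)$ defines a rational curve $R\subset X^{[2]}$ whose points are subschemes of length two. Conversely, take a rational curve $R\subset X^{[2]}$ that is not contained in the exceptional divisor $E$ and is not of the form $\{x\}+C$. Pull back the universal family $Z\to X^{[2]}$ to the normalization of $R$. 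This gives a double cover $\wC\to\bP^1$, and every irreducible component of $\wC$ has geometric genus whose normalization is either hyperelliptic or rational, and it maps to $X$. Rational components can be absorbed: a rational curve with a chosen degree-two map is still "hyperelliptic" in a weak sense, or we can avoid them by a generality argument. Points $x_1,\dots,x_n$ of $X$ lie on the image curve as soon as $R$ meets the loci $D_{x_i}=\{\xi:\ x_i\in\operatorname{Supp}\xi\}$. These loci are surfaces, since $D_{x}\cong \mathrm{Bl}_xX$.

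So it suffices to show the following: given general points $x_1,\dots,x_n$, there is an irreducible rational curve $R\subset X^{[2]}$ meeting every $D_{x_i}$, such that the associated double cover $\wC$ is irreducible. Here we use that $\rho(X)=1$. In that case $X^{[2]}$ has Picard rank $2$, and its nef and movable cones are known explicitly (Bayer--Macrì). The second extremal ray of the effective cone is spanned by a divisor $D$, and either $D$ is contracted by a divisorial contraction or it gives a Lagrangian fibration, after possibly passing to a flop. In every case the extremal contraction on the non-$E$ side, or the divisor spanning that ray, is covered by rational curves $\ell$ whose class is not proportional to the class of $E$'s fibres. I would take the family of rational curves covering this extremal divisor; it is the "wall" class from the MMP. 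We cannot work on a Lagrangian fibration, since its fibres contain no rational curves. Instead we use a uniruled divisor. Such a divisor always exists on the Hilbert square of a $K3$, because by the Bogomolov--Mumford/Chen result quoted above every curve in $|H|$ of geometric genus two sweeps out the divisor of pairs of conjugate points. More generally, any uniruled divisor $D\ne E$ is ruled by curves that dominate $X^{[2]}$ when they are deformed inside a $2$-dimensional family.

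The key step is to pass from a covering family to one that connects: we need rational curves through $n$ prescribed general divisors $D_{x_i}$. Use chains: $D$ is uniruled and $D\cdot D_{x_i}$ is a curve. Take a rational curve in $D$ through a general point of $D\cap D_{x_1}$, and glue further rational curves, either from the ruling of $D$ or from the ruling of $E$ (the fibres $\bP^1$ over the diagonal), to form a connected tree $T$ meeting each $D_{x_i}$. Then smooth $T$. On a hyperkähler manifold rational curves deform in families of dimension at least $2n-2=2$ more than expected, and by results of Ran / Charles--Mongardi--Pacienza, a nodal tree of rational curves whose components all deform in the expected dimension can be smoothed while keeping incidences. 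The smoothing gives an irreducible rational curve meeting all $D_{x_i}$, and its class is a positive combination of the classes of the components. The main obstacle is exactly this smoothing together with incidence preservation: we must check that the deformation space of the tree has the right dimension, so that imposing the $n$ incidence conditions (each of codimension $1$ on curves, since $D_{x_i}$ is a divisor... in fact a surface, codimension $2$, with the point $x_i$ varying in a $2$-dimensional family) still leaves a solution. The counting works because both the curves and the points contribute two parameters per incidence. Finally, irreducibility of $\wC$ and the fact that its components are not rational for general choices should follow from a monodromy argument on the points $x_i$. Since every component of $\wC$ must pass through some $x_i$, and rational curves on $X$ form countably many $1$-dimensional families, rational components cannot contain general points. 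Hence the image curve through the $x_i$ is hyperelliptic.
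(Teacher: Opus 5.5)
\textbf{There is a genuine gap: the smoothing step.} Your reduction to rational curves $R\subset X^{[2]}$ meeting the surfaces $D_{x_i}$ is fine. The smoothing step, however, cannot work, and the obstruction is structural.

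Rational curves on the hyperk\"ahler fourfold $X^{[2]}$ move in families of dimension at least $2$ (Ran). A family that sweeps out a divisor $Y$ has dimension exactly $2$. To see this, pull back the symplectic form $\sigma$ to the universal $\bP^1$-bundle over the parameter space: the result is pulled back from the base. So every curve is tangent to the one-dimensional kernel of $\sigma|_Y$, i.e.\ it is a leaf of the characteristic foliation, and only one curve passes through a general point of $Y$. This $(2n-2)$-dimensional regime is exactly where the Ran / Charles--Mongardi--Pacienza smoothing results live, and they leave no extra room.

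On the other hand, for a fixed $x$, the condition $R\cap D_x\neq\emptyset$ means that $x$ lies on the image curve of $R$ in $X$. This is a codimension-one condition on the family. The $x_i$ are prescribed, not parameters, so your count of ``two parameters per incidence'' is wrong, and a deformation of the tree keeping $n\geq 3$ incidences does not exist in general. Divisorial families only connect $2$ general points, which is the classical genus-two statement recalled in the introduction. In fact, the same form argument shows that any family of rational curves of dimension $\geq 3$ in $X^{[2]}$ sweeps out a Lagrangian surface, which is then rational. So for $n\geq 3$ the statement is essentially equivalent to the existence of a rational surface $P\subset X^{[2]}$ whose incidence correspondence dominates $X$.

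Even the tree is problematic. Chains of fibres of a uniruled divisor and fibres of $E_{\mathrm{HC}}$ meet only at finitely many points, so they carry essentially one free parameter. To attach many free teeth you need a rational handle in the divisor that is not a fibre, i.e.\ one lying over a rational curve $\Gamma\subset S$. At that point you are already inside the surface $\varphi^{-1}(\Gamma)$.

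\textbf{The missing idea} is that surface, which the paper constructs first (\autoref{mainthm}):
\begin{itemize}
\item for $d$ not a square, $P=\varphi^{-1}(\Gamma)$, where $\Gamma$ is an integral rational curve on the $K3$ surface $S$ at the centre of the second divisorial contraction;
\item for $d=k^2$ with $k\geq 2$, $P=f^{-1}([C_0])$, the fibre of the Lagrangian fibration over an integral rational curve $C_0\in|H|$;
\item for $d=1$, the plane of fibres of the double cover.
\end{itemize}
Dominance is checked with the incidence functional $\ell([P])=2da-c>0$, plus an extra argument when flops intervene.

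Once $P$ is available, no deformation theory in $X^{[2]}$ is needed. You take rational curves of high degree on a resolution $\widetilde P$ through the images of lifts of the $x_i$. Their preimages in the double cover are integral, because the cover is connected (otherwise $X$ would be uniruled) and a general such curve meets the branch locus transversally.

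Your claim that the Lagrangian fibration is useless because its fibres contain no rational curves is false precisely for the fibres that matter. The compactified Jacobian of a rational curve of arithmetic genus two is a rational surface.

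Two minor points. Rational curves on $X$ are rigid and countable, not arranged in one-dimensional families. You also need to exclude elliptic image curves; this follows because at least two general points are imposed.
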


\quad This is equivalent to the fact that $X$ is dominated by a double cover of $\mathbb P^2$, see \cite[Thm.~C]{CLM}.

\quad Over a number field, the families of hyperelliptic curves we construct carry dense quadratic points after a finite field extension. In particular, we deduce:

\begin{thmIntr}\label{arithmeticIntro}
	Let $X$ be a $K3$ surface of geometric Picard rank one defined over a number field. After a finite field extension, quadratic points are Zariski dense in $X$.
\end{thmIntr}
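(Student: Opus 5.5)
The strategy is to deduce Theorem~\ref{arithmeticIntro} from the geometric construction behind Theorem~\ref{irrCor}, run over a number field. By Theorem~\ref{irrCor} and \cite[Thm.~C]{CLM}, over $\bar k$ there is a family of hyperelliptic curves covering $X$: a smooth surface $S$, a double cover $S\to \mathbb P^2$ (more precisely a family $\mathcal C\to B$ of curves whose normalizations are hyperelliptic), and a dominant map $S\dashrightarrow X$. All of these data are defined over some finite extension $k'$ of $k$, since they are cut out by finitely many equations. I will enlarge $k'$ further as needed, so that the base of the family has dense $k'$-points and each member carries its hyperelliptic $g^1_2$ over its field of definition.

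The key steps are as follows.

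\emph{Rational base.} Arrange that the parameter space $B$ of hyperelliptic curves is $k'$-rational, or at least has dense $k'$-points. The natural choice is a rational base, e.g.\ the curves through a fixed configuration of points, parametrized by an open subset of $\mathbb P^1$ or $\mathbb P^2$ coming from the double cover of $\mathbb P^2$. Then $k'$-points are dense in $B$.

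\emph{Hyperelliptic map over $k'$.} For a $k'$-point $b\in B$, the normalization $\widetilde C_b$ is hyperelliptic and defined over $k'$. Its $g^1_2$ is canonical, hence Galois invariant, so it gives a degree-two map $\widetilde C_b\to Q_b$ onto a conic over $k'$. If $Q_b$ has a $k'$-point, then $Q_b\cong\mathbb P^1_{k'}$, and the preimages of the infinitely many points of $\mathbb P^1(k')$ are points of $\widetilde C_b$ of degree at most $2$. Their images in $X$ are quadratic points that are Zariski dense in the curve $C_b$.

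\emph{Density in $X$.} The curves $C_b$ for $b\in B(k')$ are Zariski dense in the family, and the family dominates $X$. So any closed subset containing all these quadratic points contains infinitely many of the $C_b$, hence all of $X$.

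The main obstacle is the conic issue: one must make $Q_b$ split uniformly in $b$. I would first try to exploit the global structure: the family of conics $Q_b$ forms a conic bundle over $B$, which in the construction is a fibration of the rational surface $S/\iota$ (the quotient by the hyperelliptic involution), and this quotient is birational to $\mathbb P^2$. After a finite extension making $S/\iota$ rational over $k'$, a $k'$-rational section of the conic bundle exists. For instance, the pullback of a line in $\mathbb P^2$ meets each fiber, giving rational points on each conic.

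A simpler alternative avoids conics entirely. Pull back $k'$-points of $\mathbb P^2$ via the degree-two map $S\to\mathbb P^2$: each such point has a preimage in $S$ of degree at most $2$, and these preimages are dense in $S$. Pushing them forward along the dominant map $S\dashrightarrow X$ gives a dense set of points of $X$ of degree at most $2$, once the base points of the rational map are avoided. This works as long as $S\to\mathbb P^2$ and $S\dashrightarrow X$ are both defined over $k'$, which again only needs a finite extension. I would present this second argument as the proof, with the hyperelliptic-family picture as motivation.
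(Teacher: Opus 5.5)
Your second argument has the same structure as the paper's proof. The paper makes the rational surface $P\subset X^{[2]}$ of \autoref{mainthm} rational over a finite extension $k'$. It then observes that each $k'$-point of $P$ is a length-two subscheme supported on points of degree at most two. These supports are dense because the universal family over $P$ dominates $X$. The double cover in the proof of \autoref{irrCor} is exactly that universal family over (a resolution of) $P$. So pulling back $\bP^2(k')$ along $S\to\bP^2$ is the same construction, and your density argument is fine.

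The gap is the descent step. \autoref{irrCor}, \autoref{mainthm} and \cite[Thm.~C]{CLM} are statements about the complex surface $X_{\bC}$. They give you $S$, $S\to\bP^2$ and $S\dashrightarrow X$ over $\bC$, not over $\bar k$ as you assert. ``Cut out by finitely many equations'' then only shows that these data are defined over a field finitely generated over $k$, possibly transcendental. For that particular model no finite extension of $k$ suffices.

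This passage from $\bC$ to $\bar k$ is the one arithmetic input the paper supplies. $P$ is a rational Lagrangian surface, and such surfaces are rigid, since a positive-dimensional family of Lagrangian subvarieties has irregular general member. Hence there are only finitely many in a given class, so $P$ has finitely many Galois conjugates and is defined over $\bar k$.

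You can also repair your version without any rigidity. Spread the triple out over $T=\operatorname{Spec}R$, where $R\subset\bC$ is the finitely generated $\bar k$-algebra generated by the coefficients. Being a finite degree-two cover of $\bP^2$ is an open condition on $t\in T$. Admitting a dominant map to $X_{\bar k}$ is also open, by Chevalley applied to the relative map to $X\times T$. Now specialize at a $\bar k$-point of this dense open set. With this step inserted, the rest of your second argument proves the theorem in the same form as the paper, namely density of points of degree at most two.
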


 \quad Recall that a variety over a number field is said to be potentially dense if its rational points become Zariski dense after a finite field extension. It is a folklore conjecture that every $K3$ surface is potentially dense, and \autoref{arithmeticIntro} can be seen as a weak form of this conjecture, in which rational points are replaced by quadratic points\footnote{We do not claim any real progress towards the potential density conjecture.}.

\quad The two theorems above fit naturally with several invariants of geometric and arithmetic nature that have recently attracted some attention. Recall that the degree of irrationality $\irr(Y)$ of a variety $Y$ is the least degree of a dominant rational map from $Y$ to projective space of the same dimension; it measures how far $Y$ is from being rational, and we refer to \cite{primer} for an excellent survey. For varieties of dimension at least two it is well understood only in a few cases, such as hypersurfaces \cite{bdelu} and abelian surfaces \cite{chenabelian,martinabelian,jiangabelian}. The degree of irrationality of a $K3$ surface of Picard rank one is conjectured to go to infinity with the degree of the polarization. By contrast, \autoref{irrCor} shows that $X$ is dominated by a surface $Y$ with $\irr(Y)=2$. Bastianelli and Yang studied the degree of irrationality of covers of surfaces in $\mathbb P^3$ and of very general hypersurfaces in $\mathbb P^{n+1}$ \cite{bastianelli,yang}, and \autoref{irrCor} can be seen as an analogue of their results for $K3$ surfaces. In the language of \cite{CLM}, \autoref{irrCor} says that the $p$-connecting gonality of $X$ is equal to $2$ for every $p\geq2$. In fact, this question grew out of discussions with Nathan Chen and Ben Church, where we explored the possibility of obstructing low degree maps from $K3$ surfaces to $\mathbb P^2$ by counting how many general points curves of a given gonality can connect.

\quad On the arithmetic side, \autoref{arithmeticIntro} extends \cite[Theorem~A]{corvajazucconi}, where the analogous statement is proved for quartic surfaces, to $K3$ surfaces of arbitrary degree. Following Viray and Vogt \cite[Defs.~2.2 and 2.3]{virayvogt}, it says that $2\in\wp(X/k)$, where
\[
\wp(X/k)=\{e\in\mathbb N \mid \text{points of degree $e$ are dense in $X$ after a finite field extension}\}
\]
is the potential density degree set. The minimum of this set has recently been proposed as an arithmetic counterpart of $\irr$ \cite[\S4.6]{primer}, and from this point of view \autoref{arithmeticIntro} gives $\min\wp(X/k)\leq 2$; the potential density conjecture predicts that this minimum is $1$. More generally, this fits into the conjectural picture of Harris and Tschinkel \cite[Conj.~1.4]{harristschinkel}, predicting potential density for smooth projective varieties with nef anticanonical bundle. The conjecture is known for elliptic $K3$ surfaces by \cite{bogomolovtschinkel}. When $d$ is a square, \autoref{arithmeticIntro} also follows from the main theorem of \cite{hassetttschinkelsym}, where Hassett and Tschinkel prove, among other things, potential density for $X^{[2]}$ if $d=k^2$. A remarkable related case is due to Amerik and Voisin \cite{amerikvoisin}, who establish potential density on the variety of lines of a sufficiently general cubic fourfold.

\quad Both theorems come from the geometry of the Hilbert square $X^{[2]}$.  We study rational surfaces $P\subset X^{[2]}$ whose incidence correspondence dominates $X$. The universal family over $P$ is then a double cover of a rational surface dominating $X$ and the hyperelliptic curves of \autoref{irrCor} are obtained by pulling back rational curves in $P$. Our main result is that such surfaces always exist.

\begin{thmIntr}\label{mainthm}
	Let $(X,L)$ be a complex polarized $K3$ surface of Picard rank $1$. Then there is a rational surface $P\subset X^{[2]}$ whose incidence correspondence dominates $X$.
	\label{A}
\end{thmIntr}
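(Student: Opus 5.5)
We will look for $P$ among the Lagrangian or rational loci of $X^{[2]}$ that come out of its birational geometry. The keywords point to the minimal model program and to kernel bundles. Write $L^2=2g-2$. Since $\rho(X)=1$, the nef and movable cones of $X^{[2]}$ are determined by solutions of Pell-type equations (Bayer--Macrì). Our plan has four steps.

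\begin{enumerate}
\item \emph{Contraction.} Produce a divisorial or small contraction, or a flop, of $X^{[2]}$ or of a birational model of it. We aim for one whose exceptional locus contains a $\bP^1$-bundle, or a $\bP^2$, over a positive-dimensional base. Concretely, we will realize $X^{[2]}$ (or a model) as a moduli space $M_\sigma(v)$ of Bridgeland-stable objects with $v=(1,0,-1)$. A wall-crossing in $\mathrm{Stab}(X)$ then gives a destabilizing sequence $A\to E\to B$, and the locus of $E$ destabilized at the wall is a projective bundle $\bP(\mathrm{Ext}^1(B,A))$ over $M(v_A)\times M(v_B)$.
\item \emph{The rational surface.} Choose the wall so that $M(v_A)$ is a point, i.e.\ $A$ is spherical (a rigid vector bundle such as a Mukai bundle), and $M(v_B)$ is either a point or $X$ itself. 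The fibres $\bP(\mathrm{Ext}^1(B,A))$ are projective spaces of dimension at least $2$, so inside them we can take a plane $\bP^2$, or a rational surface swept out by lines.
\item \emph{Transfer to $X^{[2]}$.} Pull $P$ back to $X^{[2]}$, either through the birational map (on the locus where it is an isomorphism) or by taking its strict transform. Alternatively, if the exceptional locus lies in $X^{[2]}$ itself, use it there directly.
\item \emph{Dominance.} Show that the incidence correspondence $\{(\xi,x):x\in\xi\in P\}\to X$ is dominant. For this we use the kernel bundle description. A point $\xi$ of the $\bP^2$ corresponds to a subscheme cut out by sections of a rigid bundle $E$ satisfying $h^0(E)=3$ or similar, so the points of $\xi$ are degeneracy points of a pencil of sections. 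As $\xi$ varies in $\bP^2$ these degeneracy loci cover a surface, which must be all of $X$. The reason is that the $\bP^2$ is not contained in any fibre $\{\xi\ni x\}\cong \mathrm{Bl}_xX$, and $\rho=1$ rules out the incidence image being a curve: such a curve $C$ would have to satisfy $[C]=mL$ with all the $\xi$ supported on it, which contradicts a dimension count.
\end{enumerate}

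Step 4 could also be replaced by the following alternative. If the exceptional loci $P_t$ come in a positive-dimensional family that moves the points, it suffices to show they do not all lie over a fixed curve.

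The main obstacle is Step 2: guaranteeing, for every degree $2g-2$, a wall whose destabilized locus contains rational surfaces rather than only rational curves (the $\bP^1$-fibres of a divisorial contraction). Our first attempt will be the Brill--Noether wall given by the Mukai bundle $A$ with $v(A)=(r,L,s)$ satisfying $\langle v(A),v\rangle$ small. If this yields only $\bP^1$-fibres, we will look for $P$ on a further birational model and pull it back to $X^{[2]}$. When no such wall exists at all, a fallback is to take a ruled surface over a rational curve: the rational curve comes from Bogomolov--Mumford, and the surface is a rational family of $g^1_2$'s along it. Ensuring dominance in that case is the delicate point.
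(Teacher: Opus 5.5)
Your proposal has two genuine gaps.

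\textbf{Step 2 fails for most degrees.} It relies on destabilized fibres $\mathbb{P}(\mathrm{Ext}^1(B,A))$ of dimension at least $2$. For $v=(1,0,-1)$, $v^2=2$, and $A$ spherical, one has $\dim\mathbb{P}(\mathrm{Ext}^1(B,A))=\langle v_A,v_B\rangle-1=1-v_B^2/2$. So you get a plane only when $v_B^2=-2$, i.e.\ on a flopping wall, and then the base is a point. Such walls occur only for special $d$; there are none for a quartic, where Beauville's involution is regular and $\mathrm{Nef}=\mathrm{Mov}$.
\begin{itemize}
\item On a Brill--Noether wall $v_B^2=0$. You only get a $\mathbb{P}^1$-fibration $E\to S=M(v_B)$ over a $K3$ surface $S$, a (possibly twisted) Fourier--Mukai partner that is in general not $X$.
\item For $d=k^2$ with $k\geq 2$ there is no Brill--Noether wall and no flop at all. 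The second boundary ray $h-k\delta$ of the movable cone is isotropic and induces a Lagrangian fibration $f\colon M'\to\mathbb{P}^2$, which your plan never uses.
\end{itemize}
The missing idea is to apply Bogomolov--Mumford on the right surface. Take a rational curve $\Gamma\subset S$ and set $P=\varphi^{-1}(\Gamma)$. In the square case, take the fibre of $f$ over a rational curve $C_0\in|H|$ on the degree-two partner $X'$, which is a compactified Jacobian of a rational curve. Your fallback gestures at the first construction, but it never says where the rational family of $g^1_2$'s comes from. Producing that family is essentially the content of the theorem.

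\textbf{The dominance argument in Step 4 is invalid.} The condition $\rho(X)=1$ does not stop a rational surface in $X^{[2]}$ from having one-dimensional incidence. The subschemes supported on a curve $C$ already form a surface (reduced pairs together with $E_{\mathrm{HC}}$ over $C$), so there is no dimension contradiction. Concretely, let $R\subset X$ be a rational curve with normalization $\widetilde R$. Both the closure of the reduced pairs on $R$ (birational to $\widetilde R^{(2)}\cong\mathbb{P}^2$) and $E_{\mathrm{HC}}$ restricted over $R$ are rational surfaces whose incidence is just $R$.

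In particular, $E_{\mathrm{HC}}$ over $R$ and $E$ over $\Gamma$ are both rational ruled surfaces over $\mathbb{P}^1$, and only their cohomology classes tell them apart. This is what the paper's incidence lemma does: for $[P]=ah^2+bh\delta+c\delta^2$ one has $q_*p^*[P]=(2da-c)[X]$. Hence:
\begin{itemize}
\item the Hilbert--Chow surface, of class proportional to $h\delta$, gives $0$;
\item $[P]=\lambda A\cdot[E]$ with $A=xh-dy\delta$ and $[E]=\epsilon(yh-x\delta)$ gives $\lambda\epsilon dxy>0$;
\item $[P]=(h-k\delta)^2$ gives $d>0$.
\end{itemize}

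Finally, when $M'\neq X^{[2]}$ you need an actual transfer argument, not just a strict transform. Either one of the flopped Lagrangian planes already dominates $X$, or a general $Z_p=\{\xi\ni p\}\cong\mathrm{Bl}_pX$ misses the flopping locus. In the latter case the incidence count computed on $M'$ is valid for the strict transform of $P$ in $X^{[2]}$.
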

\quad It should come as no surprise that such a rational surface has interesting consequences at the level of rational and integral Chow groups. In particular, the induced correspondence can be used to recover the decomposition of the small diagonal $\Delta_{123}\subset X^3$ due to Beauville and Voisin \cite{beauvillevoisin}, and it gives new bounds on the torsion order of this decomposition in the integral Chow group. Recall that every $K3$ surface carries a canonical class $c_X\in \CH_0(X)$, defined as the class of a point on a rational curve on $X$. We set $c_i:=\operatorname{pr}_i^*c_X\in\CH^2(X^3)$, and we denote by $\Delta_{ij}\subset X^3$ the big diagonals.

\begin{thmIntr}[Beauville--Voisin]\label{diagonalIntro}
	Let $X$ be a complex projective $K3$ surface. Then the following decomposition holds in $\CH^4(X^3)_{\mathbb Q}$:
	\begin{equation*}
		[\Delta_{123}]
		=\big([\Delta_{12}]-c_1\big)\cdot c_3
		+\big([\Delta_{23}]-c_2\big)\cdot c_1
		+\big([\Delta_{13}]-c_3\big)\cdot c_2 .
	\end{equation*}
\end{thmIntr}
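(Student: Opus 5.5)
Since the statement is about a general $K3$ surface while \autoref{mainthm} only gives a rational surface in Picard rank one, I plan to prove the decomposition for Picard rank one surfaces via \autoref{mainthm} and then spread it out. Take a polarized family $\mathcal X\to B$ of $K3$ surfaces whose very general member has Picard rank one. The cycle $\Gamma=\Delta_{123}-\sum(\Delta_{ij}-c_i)c_k$ is defined on the relative cube, because $c_X$ is the restriction of a relative class: it is a fixed rational multiple of $L^2$. By the standard spreading argument (the locus where a relative cycle vanishes in $\CH_{\mathbb Q}$ is a countable union of closed subsets), it suffices to show $\Gamma_b=0$ for very general $b$. Every $K3$ surface is a specialization of Picard-rank-one ones, and specialization of Chow groups preserves rational equivalence, so this suffices for all $X$.

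For the Picard rank one case, let $P\subset X^{[2]}$ be the rational surface of \autoref{mainthm}, and let $Z\subset P\times X$ be the universal family restricted to $P$. Then $Z\to P$ is a double cover and $Z\to X$ is dominant, hence generically finite of some degree $N$. The first reformulation I use is this: the Beauville--Voisin identity is equivalent to saying that for general points $x,y\in X$, the zero-cycle $x+y\in\CH_0(X)$, after subtracting $2c_X$, behaves like the diagonal correspondence on pairs. Concretely, restrict $\Gamma$ to $\{x\}\times X\times X$ and to similar slices; Bloch--Srinivas style decomposition then lets me check $\Gamma$ via its action as a correspondence on generic points. The key input is that for $\xi\in P$ with $\xi=\{p,q\}$, the class $[p]+[q]\in\CH_0(X)$ is constant, since $P$ is rational and so all its points are rationally equivalent. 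Evaluating at a point of $P$ lying over a rational curve shows this constant equals $2c_X$. Since $Z\to X$ is dominant, a general $p\in X$ has a partner $q$ with $p+q=2c_X$. This gives an involution-type relation on $X\times X$: the cycle $\Delta_{12}+\iota$, where $\iota$ is the graph of the partner correspondence of degree $N$, is equivalent to a cycle supported on $c_X\times X\cup X\times c_X$.

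The decisive step, and the one I expect to be hardest, is upgrading this relation from $X^2$ to the small diagonal in $X^3$. My plan follows Beauville--Voisin's original strategy in spirit. Consider the triple $(p,p,q)$ over $Z$: pushing the curve family $P$ forward along $Z\times_P Z\subset X^2$ gives a rational threefold-like correspondence. I apply the relation $p+q\sim 2c_X$ in the fibre of $\operatorname{pr}_{12}$ and expand. For this I need the relation to hold in families (relative Chow over $Z$), not just pointwise. That is where rationality of $P$ matters: $\CH_0$ of $P$ being $\mathbb Z$ gives a decomposition of the diagonal of $P$, which transports via $Z$ to an identity in $\CH^4(X^3)$ after dividing by $N$. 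Matching terms then yields the three summands $(\Delta_{ij}-c_i)c_k$, and denominators come only from $N$ and the degree $2$, which explains the torsion bounds.
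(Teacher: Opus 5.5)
Your reduction to Picard rank one (spreading out, with $c_X$ a rational multiple of $L\cdot L$) matches the paper. So does your observation that all fibres of $Z\to P$ have the same class $\eta\in\CH_0(X)$.

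The gap is in the step you yourself call decisive: you give no actual mechanism for reaching $X^3$, and the tools you name cannot supply one.

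\begin{itemize}
\item Testing $\Gamma$ through its action at generic points, in the style of Bloch--Srinivas, at best shows that $\Gamma$ is rationally equivalent to a cycle supported on $D\times X\times X$ for some curve $D$. By itself it cannot give $\Gamma=0$. Summands of the identity such as $[\Delta_{23}]\cdot c_1$ and $c_1c_2$ are themselves supported over a point of the first factor, so they are invisible to that test.
\item The pointwise relation $[p]+[q]=2c_X$ does not yield the relation on $X\times X$ that you state. Take $L^2=2$ and $P$ the plane of fibres of $X\to\bP^2$, so $N=1$. The graph $G_\iota$ of the covering involution satisfies $[\Delta_X]+[G_\iota]=2(X\times c_X+c_X\times X)+L\times L$. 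Since $\iota^*$ is $+1$ on $L$ and $-1$ on $L^\perp$, one sees already in cohomology that $L\times L$ cannot be absorbed into multiples of $X\times c_X$ and $c_X\times X$: it has a nonzero $H^2\otimes H^2$ K\"unneth component.
\item In general the correct identity is $\Sigma=N(X\times\eta+\eta\times X-\Delta_X)+\mu\,L\times L$. The diagonal enters with multiplicity $N$, not $1$, and there is an extra term $\mu\,L\times L$ with $\mu\neq0$ in general.
\end{itemize}

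What the paper does instead is an exact cycle-level computation.

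\begin{itemize}
\item A resolution $S$ of $P$ is rational and $\Pic(X)=\bZ L$, so $[\mathcal Z]=[S]\times\eta+D\times L+N[s_0]\times[X]$ in $\CH^2(S\times X)$. This is the precise form of your ``decomposition of the diagonal of $P$ transported via $Z$''.
\item One computes $I_2=(\operatorname{pr}_{X^2})_*(\mathcal Z_1\cdot\mathcal Z_2)$ and $I_3=(\operatorname{pr}_{X^3})_*(\mathcal Z_1\cdot\mathcal Z_2\cdot\mathcal Z_3)$ in two ways. With $\mu=D\cdot D$, the K\"unneth expansion gives $I_2=N(X\times\eta+\eta\times X)+\mu\,L\times L$ and $I_3=4N\sum_{\mathrm{cyc}}c_2c_3+2\mu\sum_{\mathrm{cyc}}c_1L_2L_3$.
\item The components of the relative square and cube of the double cover give $I_2=N\Delta_X+\Sigma$ and $I_3=N\Delta_{123}+\sum_{i<j}\delta_{ij*}\Sigma$ (all three points equal, or exactly two equal). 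This second count is where the small diagonal actually enters; your triples $(p,p,q)$ are the components $\delta_{ij*}\Sigma$.
\item To evaluate $\delta_{ij*}(L\times L)$ one needs $\Delta_{X*}L=L\times c_X+c_X\times L$, obtained from a rational curve in $|L|$. Only with it do the $\mu$-terms cancel, leaving $2N$ times the Beauville--Voisin cycle equal to zero.
\end{itemize}

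None of these three ingredients (the K\"unneth form of $[\mathcal Z]$, the two-way count of $I_2$ and $I_3$, and the divisor--diagonal identity) is in your plan.

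A smaller point: evaluating at a point of $P$ ``lying over a rational curve'' constrains only one of the two points of $\xi$, so it does not give $\eta=2c_X$. The paper uses $[P]\cdot h\delta>0$, so that $P$ meets $E_{\mathrm{HC}}$ in a curve whose image $C_P\subset X$ consists of points $x$ with $2[x]=\eta$. By Roitman each component of $C_P$ is then a constant-cycle curve, and it meets a rational curve.
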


\quad  The starting observation here was suggested to us by Olivier Martin. Our proof goes along the lines of \cite{bazhov}; it works with integral coefficients up to the very last step and shows that the relation holds in $\CH^4(X^3)$ up to $2N$-torsion, where $N$ is the degree of the incidence correspondence over $X$. In several cases one can find two surfaces $P_1$ and $P_2$ whose incidence correspondences have coprime degrees, and then the relation holds up to $2$-torsion; this happens, for instance, in degree $2$ and for infinitely many other values of the degree, see \autoref{coprimm} and \autoref{remark}. Since the degree of irrationality of $X$ is expected to grow with the degree, while the decomposition holds up to $2$-torsion in arbitrarily high degree, the torsion order of the integral decomposition of the small diagonal most likely cannot be used to obstruct low degree rational maps from $X$ to $\mathbb P^2$. 

\quad As a by-product, the intersection of $P$ with the exceptional divisor produces new examples of constant-cycle curves, i.e.\ curves whose points all have class $c_X$, introduced and studied by Huybrechts \cite{huybrechtsccc}. In the following $E_{HC}\subset X^{[2]}$ denotes the Hilbert--Chow exceptional divisor (i.e. the locus of non-reduced subschemes).
\begin{corIntr}
	Every component of the image of $P\cap E_{\mathrm{HC}}$ is a constant-cycle curve.
\end{corIntr}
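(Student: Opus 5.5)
The plan is to exploit the rationality of $P$. Every point of $P$ is rationally equivalent in $\CH_0(X^{[2]})$ to a fixed point $\xi_0\in P$, since $P$ is a rational surface. The incidence correspondence $Z\subset X^{[2]}\times X$ then transfers this to $X$: for every $\xi\in P$ we have $Z_*[\xi]=[\xi]$ in $\CH_0(X)$, the class of the length-two subscheme. Hence the degree-two cycle $[\xi]\in\CH_0(X)$ is constant as $\xi$ varies in $P$. Call this class $\alpha\in\CH_0(X)$.

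First I identify $\alpha$. Because the incidence correspondence dominates $X$, a general point $x\in X$ lies in some $\xi=\{x,y\}\in P$, so $[x]+[y]=\alpha$. To show $\alpha=2c_X$, I would pick a rational curve $R\subset X$; by Bogomolov--Mumford such curves exist. Points of $P$ whose support meets $R$ form a curve in $P$, at least when the incidence correspondence dominates $X$ with finite fibres over general points. For such $\xi=\{r,y\}$ we get $\alpha=c_X+[y]$, with $y$ ranging over the curve $D$ swept out by the residual points. I then need $D$ to meet $R$, or more simply need some $\xi\in P$ supported entirely on rational curves. Since $\Pic X=\bZ L$, the curve $D$ is ample and therefore meets $R$. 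There the residual point $y$ lies on $R$, or at worst a limit point, which is fine by specialization of rational equivalence. This gives $\alpha=2c_X$.

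Now take $\xi\in P\cap E_{HC}$, a non-reduced subscheme supported at one point $x$. Its class in $\CH_0(X)$ is $2[x]$, so $2[x]=2c_X$ in $\CH_0(X)$. By Roitman's theorem $\CH_0(X)$ is torsion free, which yields $[x]=c_X$. Every point of the image of $P\cap E_{HC}$ in $X$ therefore has class $c_X$, so each one-dimensional component of the image is a constant-cycle curve. Here I use the fact that $P\cap E_{HC}$ is a curve, since $E_{HC}$ is a divisor; zero-dimensional components can be ignored.

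The main obstacle is the identification $\alpha=2c_X$, namely making sure the relevant curve $D$ really meets $R$, and handling limits when a point of the support collides. An alternative is to use the known fact that $\alpha$ lies in $\bZ c_X$. For that I would argue that $[x]$ for general $x$ lies in the image of $\CH_0(R)+\CH_0(D')$ for curves $D'$, and then use Beauville--Voisin-type intersection arguments: $c_X$ is the class of $L^2/L^2$, and intersecting with divisors gives classes in $\bZ c_X$. With this route the constancy from the first paragraph already suffices, as long as some degeneration of $P$ meets a rational curve doubly.
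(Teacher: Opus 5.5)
Your argument is essentially the paper's. The fibre class $\alpha$ (the paper's $\eta$) is constant because $\CH_0$ of a resolution of $P$ is $\bZ$. A non-reduced subscheme supported at $x$ then gives $2[x]=\alpha$, and Roitman's theorem removes the factor $2$.

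The step you single out as the main obstacle is not needed. It is also the only place where your write-up is not airtight.

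\emph{Why it is unnecessary.} For two points $x,x'$ of the image of $P\cap E_{\mathrm{HC}}$ you already have $2[x]=\alpha=2[x']$. Hence $2([x]-[x'])=0$, and torsion-freeness of $\CH_0(X)$ gives $[x]=[x']$. That is precisely the constant-cycle property, and it is how the paper concludes, without knowing $\alpha$ in advance.

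\emph{Getting the value $c_X$ directly.} If you also want $[x]=c_X$, apply your intersection argument to the image curve itself rather than to the residual locus. Since $\Pic(X)=\bZ L$, every component of the image meets a rational curve $R$. At an intersection point $[x]=c_X$, so $\alpha=2c_X$ immediately. The paper does the same thing using an ample divisor supported on rational curves.

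\emph{The gap in the residual-curve argument.} Your argument tacitly assumes that the locus $D$ of residual points is a curve. A priori it could be a single point $y_0$, namely if $P$ contains every subscheme with cycle $r+y_0$, $r\in R$. In that case you only get $\alpha=c_X+[y_0]$, and the claim ``$D$ is ample'' fails. This can be repaired by changing $R$: only finitely many points have positive-dimensional fibre under the evaluation map. There is no reason to do so, however, and the vague alternative in your last paragraph can simply be dropped.
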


\quad All the proofs are fairly elementary, modulo some deep results on the birational geometry of $X^{[2]}$. The proof of \autoref{mainthm} relies on foundational work of Bayer and Macrì \cite{bayermacri}, and \autoref{irrCor} and \autoref{arithmeticIntro} follow almost immediately. The proof of \autoref{diagonalIntro} is more involved and follows an argument of Bazhov \cite{bazhov}, who gave a different proof of the decomposition using another auxiliary correspondence; here it is replaced by the incidence correspondence over $P$.

\quad Here is a sketch of how to produce the rational surface of \autoref{A}. The two boundary rays of the movable cone of $X^{[2]}$ produce the Hilbert--Chow contraction and, up to a small birational modification, either another divisorial contraction, whose exceptional divisor is a fibration in rational curves over a $K3$ surface\footnote{A (possibly twisted) Fourier--Mukai partner of $X$.} $S$, or a Lagrangian fibration \cite[Thm.~5.7 and \S13]{bayermacri}. In the first case we restrict this conic bundle to a rational curve in $S$; in the second we take a rational fibre of the Lagrangian fibration. For a sparse value of the degrees a Mukai flop is needed to reach the model carrying the contraction. In this case there is a second option for the rational surface $P$: the centre of the flop is a Lagrangian plane, hence already a rational surface in $X^{[2]}$. A cohomological computation shows that in all these cases the incidence correspondence dominates $X$.

\quad Applied blindly, the Bayer--Macrì machinery produces the rational surfaces of \autoref{mainthm} almost magically, but it says little about the hyperelliptic curves themselves. In concrete cases, however, these surfaces and the curves they parametrize can often be described quite explicitly; we describe some of them in Section~\ref{sec:bundles}.  This was in fact our starting point, before diving into the beautiful literature on the MMP of $X^{[2]}$. In the examples above (and generalizing a bit the concept of stable bundle
 also in \cite{bayermacri}) the hyperelliptic pencils appear as pencils of morphisms between two vector bundles on $X$ (or, more generally, two stable objects), each morphism degenerating exactly along a length-two subscheme.  Using a twist of arguments already present in the literature, namely a kernel bundle construction in the spirit of \cite{lazarsfeld}, which to some extent is hidden in many of the papers mentioned above, we realize the hyperelliptic pencil $|A|$ on a curve $C\subset X$ as a line in $\mathbb P\mathrm{Hom}(E,F)$ for two vector bundles $E,F$ of consecutive rank on $X$ with top Chern class satisfying $c_2(F)=c_2(E)+2$ (each of these morphisms drops rank at $2$ points). In this way we recover the families of \cite[Props.~7.2 and 7.7]{flaminiknutsenpacienza}, starting from the nodal hyperelliptic curves constructed by Ciliberto and Knutsen \cite{cilibertoknutsen}.

\begin{propIntr}\label{primitivethm}
	Let $(X,L)$ be a very general primitively polarized $K3$ surface of genus $g$, and let $r\geq1$. If $g=r(r+1)$, then $X$ carries a rational covering family of nodal hyperelliptic curves in $|L|$ of geometric genus $2r$. If $g=(r+1)^2$, it carries such a family of geometric genus $2r+1$.
\end{propIntr}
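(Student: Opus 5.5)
The plan is to start from the nodal hyperelliptic curves of Ciliberto--Knutsen \cite{cilibertoknutsen} and deform them via the kernel bundle construction sketched above. Then realize each hyperelliptic pencil as a line in $\mathbb P\mathrm{Hom}(E,F)$.

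\emph{Numerics.} Fix a curve $C\in|L|$ whose normalization $\wC\to C$ has genus $h$ and carries a $g^1_2$, say $A$. Push $A$ forward to $C$ and run Lazarsfeld's construction: the kernel of the evaluation map $H^0(A)\otimes\cO_X\to A$ gives a rank-two bundle. More generally, we want an elementary modification producing bundles $E,F$ of consecutive ranks $s$ and $s+1$ with $c_1(F)-c_1(E)=L$ and $c_2(F)=c_2(E)+2$. A general morphism $E\to F$ should then drop rank along a curve in $|L|$. Its cokernel is a sheaf supported on $C$ which, restricted to the normalization, recovers $A$. The loci where the rank drops further are exactly the two points of the $g^1_2$ fibre, which explains $c_2(F)=c_2(E)+2$.

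\emph{Existence of $E,F$.} We impose that $E$ and $F$ are rigid stable bundles, so that $v(E)^2=v(F)^2=-2$. Since $\Pic X=\bZ L$ with $L^2=2g-2$, the Mukai vectors are $v(E)=(s,aL,b)$ and $v(F)=(s+1,(a+1)L,b')$. The two rigidity equations, together with the Chern class condition, form a small Diophantine system. I expect its solvability to force exactly $g=r(r+1)$ or $g=(r+1)^2$, for instance with $E=\cO_X^{\oplus r}$-type or Mukai-type rigid bundles of rank $r$ and $r+1$. The stable rigid bundles then exist and are unique by Mukai's theorem. Next, compute $\chi(E,F)=-\langle v(E),v(F)\rangle$. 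Since $\mathrm{Ext}^2(E,F)=\Hom(F,E)^*=0$ by stability and slopes, this gives $\dim\Hom(E,F)$. The degeneracy curves form a family in $|L|$, and the genus of their normalization is read off from the resolution $0\to E\to F\to \mathcal Q\to0$. Namely, the cokernel is a line bundle on the partial normalization of degree matching a $g^1_2$. The arithmetic genus drop then counts the nodes, giving geometric genus $h=2r$ or $2r+1$. Rationality of the family is immediate, since it is dominated by an open subset of the projective space $\mathbb P\mathrm{Hom}(E,F)$ (modulo $\mathrm{Aut}$). The covering property follows from a dimension count: the family has dimension $\geq 2$ and its members move through a general point of $X$.

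\emph{Main obstacle.} The hard step is showing that a general degeneracy curve is irreducible with only nodes, and that the pencil really has degree two on the normalization rather than splitting. For this I would argue by degeneration to the Ciliberto--Knutsen nodal hyperelliptic curves. On those curves the $g^1_2$ produces, via the kernel bundle, a pair $(E,F)$ with the computed Mukai vectors. By rigidity and uniqueness this pair must be our $E,F$, so their morphism space contains a point giving a nodal irreducible hyperelliptic curve. Openness of both irreducibility and nodality then gives the statement for general morphisms, and very generality of $X$ is used to ensure the existence of the Ciliberto--Knutsen curves.
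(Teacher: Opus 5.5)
\textbf{There is a genuine gap: the central mechanism of your plan is wrong.} If $\operatorname{rk}E=s$ and $\operatorname{rk}F=s+1$, a general morphism $E\to F$ is injective with cokernel of rank one. So the cokernel cannot be supported on a curve. Moreover, the locus where the rank drops has expected codimension two: it is a finite set, not a curve in $|L|$.

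Your ansatz $c_1(F)-c_1(E)=L$ is also not what the $g^1_2$ produces. In the paper one takes $E=M_q$ and $F=M_{q+1}$, with $q=r-1$, resp.\ $q=r$. These are the kernels of $\operatorname{Sym}^jH^0(A)\otimes\cO_X\to\cG_j\subseteq i_*A^{\otimes j}$ for $j=q,q+1$. Both have $c_1=-L$. The relation $c_2(F)=c_2(E)+2$ comes from multiplication by $s\in H^0(A)$, which gives $0\to E\to F\to\cI_{Z_s}\to0$ with $Z_s$ the fibre $\operatorname{Div}(s)$.

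So a single morphism is a single fibre of the $g^1_2$, i.e.\ a point of $X^{[2]}$. The curve together with its pencil is a \emph{line} $|A|\subset\bP\Hom(E,F)$, and the curve is swept out by the $Z(\sigma)$, $\sigma\in|A|$. Your own phrase ``the two points of the $g^1_2$ fibre'' shows the confusion: one morphism cannot determine both the curve and one particular fibre.

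The rational family is then obtained by moving this line, which needs the following:
\begin{itemize}
\item two distinct general Ciliberto--Knutsen curves sharing the same pair $(E,F)$;
\item a rational curve in the Grassmannian of lines of the span of their two lines;
\item control of the general member, by openness of integrality and of birationality onto the image, together with semicontinuity of geometric genus and the Ciliberto--Knutsen maximality of $\gamma$.
\end{itemize}

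\textbf{Second, requiring both bundles to be rigid fails in the series $g=(r+1)^2$.} There $v(E)=(r+1,-L,r+1)$ has $v(E)^2=-2$, but $v(F)=(r+2,-L,r)$ has $v(F)^2=0$. Hence $F$ moves in a moduli space $\cM$ that is a $K3$ surface. Also $\chi(E,F)=2$, so $\bP\Hom(E,F)$ may well be just the line $|A|$, carrying a single curve.

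The two-dimensional family $T$ of hyperelliptic curves then maps to $\cM$ via $t\mapsto F_t$. If this map is generically injective, $T$ is birational to a $K3$ surface. In that case your claim that rationality is ``immediate'' because the family is dominated by $\bP\Hom(E,F)$ simply breaks down.

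The missing ingredient is the paper's dichotomy:
\begin{itemize}
\item either two general members of $T$ share $F$ (and $E$, which is rigid), and the line-moving argument above applies;
\item or $T\dashrightarrow\cM$ is birational, and one takes a rational curve in $\cM$ meeting the locus where the inverse is defined. Such a curve exists by the density of rational curves on $K3$ surfaces (Chen--Gounelas--Liedtke).
\end{itemize}
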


\quad By \cite{cilibertoknutsen}, these are the largest geometric genera of hyperelliptic curves in $|L|$. The point of \autoref{primitivethm} is that the rational families are obtained directly from the bundles $E$ and $F$, without appealing to the Bayer--Macrì description.

\smallskip

\quad Besides the work of Bayer and Macrì, the birational geometry of $X^{[2]}$, its rational curves and the connection with hyperelliptic curves in $X$ were studied (at least implicitly) by many authors, among them \cite{flaminiknutsenpacienza,cilibertoknutsen,hassetttschinkelrc,hassetttschinkelcones,markmansurvey,bayer,bayerhassetttschinkel,mongardi}. Lagrangian planes (that can be used directly as a surface in \autoref{A} for certain degrees) were studied for example in \cite[\S5]{hassetttschinkelcones}, \cite{oberdieck} and \cite{bakkerplanes}.

\quad The paper is organized as follows. Sections~\ref{sec:MMP}--\ref{sec:mainproof} contain the MMP observation and the incidence calculation. Section~\ref{sec:bundles} gives the vector-bundle picture and is independent of the previous sections. Finally, Section~\ref{sec:consequences} proves the applications stated above.

{\bf Acknowledgements and AI use.} The perspective developed in this paper has been especially important to me because it led to many rich mathematical interactions. I am very grateful to Rob Lazarsfeld for numerous discussions around this topic, for encouraging me to write this note, and for giving several suggestions on how to restructure the introduction. I was led to the vector-bundle viewpoint used here while working with Andr\'es Rojas on \cite{morettirojas}; I am deeply grateful to Andr\'es, who taught me much of what I know about derived categories of $K3$ surfaces and with whom I also discussed possible approaches to this problem at Oberwolfach in the summer of 2025. I thank Andreas Leopold Knutsen for several discussions about \cite{cilibertoknutsen} and for sharing many useful ideas about Lazarsfeld--Mukai bundles for nodal curves. I am also grateful to Francesco Bastianelli, Nathan Chen, Ben Church, Pietro Corvaja, Phil Engel, Olivier Martin, Jieao Song, and Claire Voisin who shared valuable insights and comments that shaped much of the introduction. 

AI tools were used for editorial assistance: organizing the exposition, improving the language,  literature search, and the numerical search in \autoref{remark}. All the mathematics was developed by the author, mainly growing from discussions with the aforementioned people. The author takes full responsibility for every statement, citation, and argument.

\section{The MMP for the Hilbert square}\label{sec:MMP}

In this section we give some preliminaries on the MMP for the Hilbert square of a $K3$ surface, following Bayer--Macr\`i \cite[\S 3,\S 5,\S 12,\S 13]{bayermacri}. For generalities on hyper-K\"ahler manifolds, we refer to \cite{debarreHK}.
\subsection{The lattice}

Write
\[
L^2=2d,\qquad M=X^{[2]}.
\]
Let $h=\widetilde L$ be the divisor induced by $L$, and let $E_{\mathrm{HC}}$ be the exceptional
divisor of the Hilbert--Chow morphism
\[
M\longrightarrow X^{(2)},
\]
so that $E_{\mathrm{HC}}=2\delta$ for a primitive class $\delta\in\operatorname{NS}(M)$. By
Beauville \cite[\S6]{beauville} there is an orthogonal direct sum decomposition
\[
H^2(M,\bZ)=H^2(X,\bZ)\oplus\bZ\delta
\]
with respect to the Beauville--Bogomolov form $q$ (see for instance \cite[\S 3.1]{debarreHK}), the form restricts to the intersection form on
$H^2(X,\bZ)$, and $q(\delta)=-2$. Since $\Pic(X)=\bZ L$, this gives
\[
\operatorname{NS}(M)=\bZ h\oplus\bZ\delta
\]
and
\begin{equation}\label{BBform}
q(h)=2d,\qquad q(\delta)=-2,\qquad (h,\delta)=0.
\end{equation}

Recall that the \emph{movable cone} $\mathrm{Mov}(M)\subset \mathrm{NS}(M)_{\mathbb R}$ is the closure of the cone generated by classes of divisors whose stable base locus has codimension at least two. 
One boundary of the movable cone of $M$ is spanned by $h$ and carries the Hilbert--Chow
contraction. We recall the class of the second boundary and the associated contraction in the two
cases needed below.

\subsection{The non-square case: a divisorial contraction}\label{subsec:nonsquareMMP}

Assume that $d$ is not a square. Bayer and Macr\`i determine the movable cone of
$\operatorname{Hilb}^n(X)$ for $\Pic(X)=\bZ \cdot L$ in terms of two Pell equations
\cite[\S13]{bayermacri}: their (32),
$(n-1)x^2-dy^2=1$, which governs Brill--Noether contractions, and their (33), $x^2-d(n-1)y^2=1$ with
$n-1\mid x+1$, which governs the Hilbert--Chow and the Li--Gieseker--Uhlenbeck contractions. For
$n=2$ both reduce to
\begin{equation}\label{pellone}
x^2-dy^2=1,
\end{equation}
which has a solution whenever $d$ is not a square. By \cite[Prop.~13.1(b)]{bayermacri} the second
boundary ray of the movable cone is therefore generated by
\begin{equation}\label{Aclass}
A=xh-dy\delta,
\end{equation}
where $(x,y)$ is the positive solution of \eqref{pellone} with $x$ smallest. By \eqref{BBform} it
has positive square $q(A)=2d(x^2-dy^2)=2d$. The
primitive class orthogonal to it is
\begin{equation}\label{eclass}
e=yh-x\delta,
\end{equation}
and \eqref{BBform} gives
\[
q(e)=2(dy^2-x^2)=-2,\qquad (A,e)=0.
\]

By \cite[Thms.~12.3 and 12.4]{bayermacri} the movable cone of $M$ is cut out  by the hyperplanes orthogonal to the irreducible exceptional divisors. Since
$q(A)=2d>0$ and $\rho(M)=2$, the line $\mathbb{R}A$ is one of these hyperplanes, so $e$ spans the ray of
an irreducible exceptional divisor $E$. The square of a prime exceptional divisor on a manifold
of $K3^{[2]}$ type is $-2$ or $-8$ (see for example \cite[Theorem 3.1]{vangeemenkapustka}); since $q(\epsilon e)=-2\epsilon^2$, this means that, for some $\epsilon\in\{1,2\}$,
\begin{equation}\label{exceptionalclass}
[E]=\epsilon e=\epsilon(yh-x\delta).
\end{equation}
By \cite[Thm.~5.7(a)]{bayermacri}  the ray of $A$ induces a divisorial
contraction: on the birational model $M'$ on which $A$ is nef, a positive multiple of $A$ defines
$
	\pi:M'\longrightarrow \overline M,$
and the divisor it contracts is the transform of $E$. By \cite[Thm.~5.7(b)]{bayermacri} the
birational transformation $M\dashrightarrow M'$ is a series of flopping contractions (centered along Lagrangian planes), therefore it is an isomorphism
in codimension one. The
image $S:=\pi(E)$ is a $K3$ surface, a (possibly twisted) Fourier--Mukai partner of $X$ and hence of Picard rank one,
and $\pi$ induces a $\bP^1$-fibration $\varphi:E\to S$; see for example
\cite[\S3.1]{vangeemenkapustka}.\footnote{This fibration is a smooth conic bundle and need not be
the projectivization of a rank two bundle, the obstruction being a two-torsion Brauer class on $S$.}

\subsection{The square case: a Lagrangian fibration}\label{subsec:squareMMP}

Assume that
\[
d=k^2.
\]
By \cite[Prop.~13.1(a)]{bayermacri} the second movable boundary is generated by the primitive
isotropic divisor
\begin{equation}\label{Dclass}
D=h-k\delta,
\qquad q(D)=2d-2k^2=0.
\end{equation}
Bayer and Macr\`i prove the hyper-K\"ahler SYZ conjecture for moduli spaces of sheaves on $K3$
surfaces \cite[Thm.~1.5]{bayermacri}: a primitive isotropic nef class on a birational model induces
a Lagrangian fibration. Applied to $D$ this gives a birational model $M'$ of $M$, again isomorphic
to it in codimension one, with a morphism
\begin{equation}\label{lagfibration}
f:M'\longrightarrow\bP^2,
\qquad D=f^*\cO_{\bP^2}(1).
\end{equation}

Sawon's description makes this fibration concrete \cite{sawon}. There is a Fourier--Mukai partner
$(X',H)$ with
\[
\Pic(X')=\bZ H,\qquad H^2=2,
\]
and $M$ is a twisted relative compactified Jacobian of the genus-two curves in the primitive
linear system
\[
|H|\simeq\bP^2.
\]

When $k=1$, the polarization $L$ has degree two and $X$ is a double cover of $\bP^2$. If $k> 1$ the Lagrangian fibration is defined directly on $M$.

\subsection{The incidence calculation}\label{sec:incidence}
We record a simple cohomological computation to understand when the incidence correspondence over surface in $X^{[2]}$ dominates $X$.
Let
\[
\cU\subset X\times M
\]
be the universal family, with projections
\[
\begin{tikzcd}[column sep=large]
& \cU \arrow[dl,"q"'] \arrow[dr,"p"] & \\
X && M.
\end{tikzcd}
\]
The map $p$ is finite of degree two. The variety $\cU$ is the blow-up of $X\times X$ along the diagonal. Let
\[
\rho:\cU=\operatorname{Bl}_{\Delta}(X\times X)\longrightarrow X\times X
\]
be the blow-down, let $E_\Delta$ be its exceptional divisor, and write $l_i$ for the pullback of $L$ from the $i$-th factor. Then
\begin{equation}\label{pullbackclasses}
p^*h=l_1+l_2,
\qquad
p^*\delta=E_\Delta.
\end{equation}

\begin{lem}\label{incidencelemma}
Let $P\subset M$ be a surface with cohomology class
\[
[P]=a h^2+b h\delta+c\delta^2.
\]
Then
\begin{equation}\label{incidencedegree}
q_*p^*[P]=(2da-c)[X].
\end{equation}
In particular, the universal family over $P$ dominates $X$ if and only if $2da-c>0$.
\end{lem}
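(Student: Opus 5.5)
The plan is to reduce \eqref{incidencedegree} to three intersection numbers on a single fibre of $q$. Since $p^*[P]$ has codimension $2$ in the fourfold $\cU$ and $q$ has relative dimension $2$, the class $q_*p^*[P]$ lies in $H^0(X)$. It is therefore $m[X]$, where
\[
m=\int_{\cU}p^*[P]\cdot q^*[\mathrm{pt}].
\]
I will compute $m$ on $\cU$ directly. By \eqref{pullbackclasses}, $p^*[P]=a(l_1+l_2)^2+b(l_1+l_2)E_\Delta+cE_\Delta^2$, so $m$ is linear in $a,b,c$. It suffices to restrict each of the three classes to a general fibre $F_x:=q^{-1}(x)$ and integrate there.

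\textbf{Step 1: the fibre.} Write $q=\operatorname{pr}_1\circ\rho$. The fibre $F_x$ is the strict transform of $\{x\}\times X$. This is the blow-up of $X$ at the point $(x,x)$, because $\{x\}\times X$ meets $\Delta$ transversally in that single point. The restriction of $E_\Delta$ to $F_x$ is the exceptional curve $e_x$, with $e_x^2=-1$. On $F_x$, the class $l_1$ restricts to $0$ and $l_2$ restricts to the pullback of $L$. The only point that needs justification is this transversality and the resulting identification $E_\Delta|_{F_x}=e_x$. I would check it in local coordinates, where $\Delta$ has normal bundle $T_X$ and the slice $\{x\}\times X$ maps isomorphically onto a normal slice. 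I expect this to be the only mildly delicate step.

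\textbf{Step 2: the three intersection numbers.} On $F_x$ we get:
\begin{itemize}
\item $(l_1+l_2)^2|_{F_x}=L^2=2d$;
\item $(l_1+l_2)E_\Delta|_{F_x}=L\cdot e_x=0$, since $e_x$ is contracted by the blow-down;
\item $E_\Delta^2|_{F_x}=e_x^2=-1$.
\end{itemize}
Hence $m=2da-c$, which gives \eqref{incidencedegree}. The coefficient $b$ drops out.

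An equivalent check uses the projection formula: $m=\int_M[P]\cdot p_*q^*[\mathrm{pt}]$, where $p_*q^*[\mathrm{pt}]$ is the class of the surface $\{\xi\in M: x\in\xi\}\cong\operatorname{Bl}_xX$. This gives the same three numbers.

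\textbf{Step 3: dominance.} The cycle $p^*[P]$ is represented by the effective $2$-cycle $p^{-1}(P)$, since $p$ is finite and flat of degree two. Its pushforward $q_*[p^{-1}(P)]$ is $\deg(q|_{p^{-1}(P)})\cdot[X]$ if some component of $p^{-1}(P)$ dominates $X$, and $0$ otherwise. By Step 2 this coefficient equals $2da-c$. Since it is a nonnegative integer, the universal family over $P$ dominates $X$ exactly when $2da-c>0$, which is the final claim of \autoref{incidencelemma}.
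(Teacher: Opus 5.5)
Your proof is correct and is essentially the paper's argument: expand $p^*[P]$ using $p^*h=l_1+l_2$ and $p^*\delta=E_\Delta$, get the three numbers $2d$, $0$, $-1$, and read off dominance from the push-forward of the effective cycle $p^{-1}(P)$. The only difference is technical: the paper pushes forward along $\rho$ (using $\rho_*E_\Delta=0$ and $\rho_*(E_\Delta^2)=-[\Delta]$, from $E_\Delta|_{E_\Delta}=\cO(-1)$), whereas you restrict to a fibre $q^{-1}(x)\cong\operatorname{Bl}_xX$. That is the same computation on a single fibre, and it is the $[Z_p]$ viewpoint the paper itself uses in Section~\ref{sec:mainproof}.
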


\begin{proof}
The first identity in \eqref{pullbackclasses} gives
\[
q_*p^*(h^2)=q_*(l_1+l_2)^2=2d[X].
\]
Since $\rho_*E_\Delta=0$, one has
\[
q_*p^*(h\delta)=0.
\]
Finally, the center of the blow-up has codimension two, so $E_\Delta$ is a $\bP^1$-bundle over $\Delta$ and $E_\Delta^2$ is the class of $\cO(-1)$ on it, which has degree $-1$ on each fibre. Therefore
\[
\rho_*(E_\Delta^2)=-[\Delta].
\]
Projection to the first factor gives
\[
q_*p^*(\delta^2)=-[X].
\]
Combining the three identities proves \eqref{incidencedegree}. The left-hand side is the push-forward of the universal surface over $P$, so its coefficient is the degree of the evaluation map when that map is dominant and is zero otherwise.
\end{proof}

We will use the linear functional
\begin{equation*}\label{incidencefunctional}
\ell(a h^2+b h\delta+c\delta^2)=2da-c.
\end{equation*}
 It is positive exactly when the corresponding incidence surface dominates $X$.

\section{Proof of the main theorem}\label{sec:mainproof}

We now combine the MMP with the incidence calculation. 

\subsection{The non-square case}

Assume that $d$ is not a square and retain the notation of \autoref{subsec:nonsquareMMP}. Let $S$ be the $K3$ surface at the center of the divisorial contraction $\pi:M'\to \overline M$. By the Bogomolov--Mumford theorem, $S$ contains an integral rational curve generating $\mathrm{Pic}(S)$; see \cite[Chapter 13, Thm.~1.1]{huybrechtsbook}. Choose one,
\[
\Gamma\subset S,
\]
and restrict $E$ over its normalization. The result is a surface fibred in rational curves over
$\bP^1$, hence a rational surface; its image is a rational surface
\[
P\subset E\subset M'.
\]
We first use directly the incidence computation when $M=M'$.
Let $H_S$ be the ample generator of $\Pic(S)$ and $\varphi=\pi|_E:E\to S$. We have by construction, $A=\pi^*\overline A$ with $\overline A$ ample on $\overline M$, and $\overline A|_S=w H_S$, for some integer $w$. Since $P=\varphi^{-1}(\Gamma)$, the projection formula gives,

\begin{equation*}\label{Pclassnonsquare}
[P]=\lambda A[E]
=\lambda\epsilon\left(
xyh^2-(x^2+dy^2)h\delta+dxy\delta^2
\right),
\end{equation*}
with $\lambda=\frac{1}{w}$.
 The value of the incidence functional is
\begin{equation}\label{Pdegreenonsquare}
\ell([P])
=\lambda\epsilon(2dxy-dxy)
=\lambda\epsilon dxy>0.
\end{equation}

We now explain how to conclude when $M'\neq M$. For $p\in X$
set
\[
Z_p=\{\xi:\ p\in\operatorname{Supp}\xi\}\simeq\operatorname{Bl}_pX .
\]
By \autoref{incidencelemma} the number $\ell(\alpha)$ is the number of subschemes of a cycle of
class $\alpha$ through a general point of $X$, that is $\alpha\cdot[Z_p]$. The flopping locus of
$M\dashrightarrow M'$ is a finite union of Lagrangian planes \cite{hassetttschinkelcones}. If one of
them has dominant incidence we take it as $P$ and conclude. Otherwise the incidence of each is a
curve in $X$, so for general $p$ the surface $Z_p$ misses the flopping locus and maps
isomorphically onto a surface of $M'$ along which $h$ and $\delta$ restrict as before. Hence
\eqref{Pdegreenonsquare} counts the subschemes in $P$ through a general point of $X$, so the
universal family over $P$ dominates $X$, and so does the one over its strict transform in
$X^{[2]}$, which is a rational surface. This proves \autoref{mainthm} in the non-square case.

\begin{rem}
	Using the cohomology class of the Lagrangian plane (see \cite{oberdieck,hassetttschinkelcones,bakkerplanes}) it is possible to see directly that a Lagrangian plane $\Pi$ always induces a dominant incidence correspondence of degree $N_\Pi$. The strict transform $P_1$ of one of the fibered rational surfaces $P\subset M'$ after a flop around $\Pi$ has incidence degree $N_{P_1}=\ell([P])-kN_\Pi$ for some integer $k$. Therefore if $N_\Pi$ and $\ell([P])$ are coprime both have dominant correspondence. 
	\label{coprimm}
\end{rem}
\subsection{The square case}

Assume that $d=k^2$ with $k\geq2$. Use the Lagrangian fibration \eqref{lagfibration} and its description by Sawon. The Bogomolov--Mumford theorem writes the primitive ample class $H$ on $X'$ as an effective sum of rational curves; see \cite[Cor.~13.1.5]{huybrechtsbook}. Since $\Pic(X')=\bZ H$ and $H$ is primitive, this sum consists of one integral rational curve
\[
C_0\in|H|.
\]

Let
\[
P=f^{-1}([C_0]).
\]
This fibre is a torsor under the compactified Jacobian of $C_0$, and over $\bC$ such a torsor is trivial. Thus $P$ is a rational surface. Since $D=f^*\cO_{\bP^2}(1)$, its class is
\begin{equation*}\label{Pclasssquare}
[P]=D^2=h^2-2kh\delta+k^2\delta^2.
\end{equation*}
The incidence degree is
\begin{equation*}\label{Pdegreesquare}
\ell([P])=2d-k^2=k^2=d>0,
\end{equation*}
so the universal family over $P$ dominates $X$ by \autoref{incidencelemma}.

If $k=1$, the morphism defined by $L$ is a double cover
\[
X\longrightarrow\bP^2.
\]
The fibres of the double-cover map define a plane $P\subset X^{[2]}$ whose universal family is $X$ itself. This completes the proof of \autoref{mainthm}.

\section{Some explicit examples and a vector-bundle picture}\label{sec:bundles}
In several cases one can explicitly describe a rational family of hyperelliptic curves inducing a rational surface in $X^{[2]}$.
\begin{enumerate}
	\item If $X\subset\mathbb P^3$ is a quartic, there is a natural involution, originally described by Beauville, interchanging the boundary divisors of the movable cone of $X^{[2]}$: given $\xi\in X^{[2]}$, one takes the residual intersection of the line $l_\xi$ with $X$. Here $S=X$, and the rational family of hyperelliptic curves is parametrized by a rational curve in $X$: the residual intersections of the tangent lines at a point $p\in X$ describe a curve together with a $g^1_2$, and for a general choice of $p$ this curve has genus $2$ with a node at $p$. This family was considered for example in \cite{corvajazucconi}.
	\item If $X\subset \mathbb P^3$ is a $(2,3)$ complete intersection, then $X$ carries a rank two bundle $E$ with $h^0(E)=4$ and $c_2(E)=3$ (induced by the quadric, whose section vanish along the lines on the quadric). The sections in the pencil $H^0(E\otimes \mathcal I_p)$ describe a curve nodal at $p$ together with a $g^1_2$ induced by the moving part of $Z(s)$. When $p$ moves in a rational curve in $S$ one gets a rational family of hyperelliptic curves (again these curves have a single node at $p$).
	\item If $X=Q_1\cap Q_2\cap Q_3\subset\mathbb P^5$ has degree $8$, there is an explicit Lagrangian fibration: $\xi\in X^{[2]}$ is sent to the pencil of quadrics of the net containing the line $\ell_\xi$ spanned by $\xi$, giving $f\colon X^{[2]}\to\check{\mathbb P}^2$ with $f^*\mathcal O(1)=h-2\delta$. The fibre over a pencil $\lambda$ is the Fano surface of lines of the threefold $Y_\lambda=\bigcap_{Q\in\lambda}Q$, which is the Jacobian of the genus-$2$ curve double covering $\lambda$ branched along its intersection with the discriminant sextic 
	 see  \cite[Ex.~0.9]{mukai84}. One then take as a rational surface the Jacobian of a nodal rational curve (i.e. a double cover of a pencil $\lambda$ which is a bitangent to the disciminant sextic).
	\item If $X\subset\mathrm{Gr}(2,5)$ is a general $K3$ surface of degree $10$, let $E$ be the restriction of the tautological rank $2$ bundle, so that $c_2(E)=4$. For a general $\xi\in X^{[2]}$, the residual part of the zero locus of the unique section of $E\otimes\mathcal I_\xi$ defines a birational involution of $X^{[2]}$, again exchanging the two exceptional divisors. This involution was first observed by O'Grady \cite{ogrady}.
	\item When $X^{[2]}$ is birational to the variety of lines of a cubic fourfold, such a rational surface is produced by the construction of \cite[Prop.~4.4]{voisincoisotropic}: one can take the surface of lines contained in a hyperplane section with $5$ nodes. This happens for infinitely many, though sparse, values of the degree.
	\item For some special values of the degree one has $X^{[2]}\simeq S^{[2]}$, so that the second exceptional divisor is the Hilbert--Chow divisor of $S$. The first case where this happens non-trivially (i.e.\ $S\neq X$) is a very general $K3$ surface of degree $12$ (see for example \cite[\S3.3]{morettirojas}). In this case there is an extra option for the rational surface: one may take $P$ as the product of two rational curves in $S$. It would be interesting to see if analogous considerations of \autoref{remark} are possible.
	\item The first case where $S\neq X$ and the exceptional divisor of the contraction is a conic bundle, and not a $\mathbb P^1$-bundle, is in degree $16$, see \cite{vangeemenkapustka} and for an explicit construction \cite[\S4.1]{morettirojas}.
\end{enumerate}

We now turn to \autoref{primitivethm}, which describes $P$ more explicitly for a countable series of degrees. This section is independent of the proof of \autoref{mainthm}. Its purpose is geometric: it connects the hyperelliptic pencil on a curve to a rational surface in $X^{[2]}$ through two bundles on $X$. We refer the reader to \cite[Chapter 10, \S 2]{huybrechtsbook} and \cite{mukai} for general theory of stable bundles on $K3$ surfaces.

\begin{setup}\label{bundlesetup}
Let $(X,L)$ be a very general primitively polarized $K3$ surface of genus $g$, so $L^2=2g-2$ and $\Pic(X)=\bZ L$. Fix an integral nodal curve $C\in|L|$ of geometric genus $\gamma$, with $n=g-\gamma$ nodes. Let $i:\wC\to X$ be the normalization followed by the inclusion, and let $A\in\Pic^2(\wC)$ be the hyperelliptic pencil. For $0\leq j\leq\gamma-1$,
\[
\operatorname{Sym}^jH^0(A)\xrightarrow{\sim}H^0(A^{\otimes j})
\]
has dimension $j+1$.
\end{setup}

Ciliberto and Knutsen produce an irreducible two-dimensional family $T$ of such curves in the two numerical series of \autoref{primitivethm} \cite[Thm.~0.1]{cilibertoknutsen}. 

Set
\[
q=\begin{cases}
r-1,&g=r(r+1),\\
r,&g=(r+1)^2.
\end{cases}
\]

For $0\leq j\leq\gamma-1$, let $\cG_j\subseteq i_*A^{\otimes j}$ be the image of evaluation and define $M_j$ by
\begin{equation*}\label{kernelseq}
0\longrightarrow M_j
\longrightarrow\operatorname{Sym}^jH^0(A)\otimes\cO_X
\longrightarrow\cG_j
\longrightarrow0.
\end{equation*}
The sheaf $M_j$ is locally free of rank $j+1$. We use
\[
E=M_q,
\qquad
F=M_{q+1}.
\]

\begin{lem}\label{bundlefacts}
The bundles $E$ and $F$ are $\mu_L$-stable. For $j=q,q+1$ one has
\[
c_1(M_j)=-L,
\qquad
c_2(M_j)=2j+n.
\]
The bundle $E$ is rigid. If $g=r(r+1)$ the bundle $F$ is rigid as well; if $g=(r+1)^2$ it moves in a two-dimensional moduli space, which is a $K3$ surface.
\end{lem}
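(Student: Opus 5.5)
The plan is to treat each $M_j$ as a Lazarsfeld-type kernel bundle. We compute its invariants with Grothendieck--Riemann--Roch, prove stability by the standard "no sections" argument using $\Pic(X)=\bZ L$, and then read off rigidity from the Mukai vector.

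\emph{Local freeness and Chern classes.} Since $A$ is base point free on $\wC$, the sheaf $i_*A^{\otimes j}$ is a pure one-dimensional sheaf on $X$. I would first show that $\cG_j=i_*A^{\otimes j}$, i.e.\ that the evaluation map is surjective. Away from the nodes this is clear. At a node $p$ with preimages $p_1,p_2$, the sections of $\operatorname{Sym}^jH^0(A)\simeq H^0(A^{\otimes j})$ must separate $p_1$ and $p_2$. This holds as soon as $p_1,p_2$ are not conjugate under the hyperelliptic involution, and $j\geq1$ gives enough sections.

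This is the step I expect to be the main obstacle. It requires knowing that for the curves in the Ciliberto--Knutsen family $T$ no node has hyperelliptic-conjugate preimages. I would try to extract this from their construction, or from a dimension count showing it holds for a general member of $T$. If some node had conjugate branches, $\cG_j$ would have colength one in $i_*A^{\otimes j}$ there and $c_2$ would change, so the stated formula really depends on this.

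Granting $\cG_j=i_*A^{\otimes j}$, the kernel $M_j$ is a subsheaf of a locally free sheaf with pure one-dimensional quotient. Hence $M_j$ is reflexive, so locally free on a surface, of rank $j+1$. We have $c_1(M_j)=-[C]=-L$. For the second Chern class, Riemann--Roch on $X$ applied to a rank-zero sheaf gives $\operatorname{ch}_2(\cG_j)=\chi(A^{\otimes j})=2j+1-\gamma$. Therefore
\[
c_2(M_j)=\tfrac{c_1^2}{2}+\operatorname{ch}_2(\cG_j)=(g-1)+2j+1-\gamma=2j+n .
\]

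\emph{Stability.} We have $H^0(M_j)=0$, because $\operatorname{Sym}^jH^0(A)\to H^0(A^{\otimes j})$ is injective, indeed an isomorphism by Set-up~\ref{bundlesetup}. Let $F'\subset M_j$ be a saturated subsheaf of rank $s\leq j$ with $\mu_L(F')\geq\mu_L(M_j)$. Write $c_1(F')=-aL$. The slope inequality $-a/s\geq -1/(j+1)$ forces $a\leq 0$. On the other hand, $F'$ sits inside a trivial bundle, so its dual is generically generated by global sections and $a\geq0$. Hence $a=0$. A subsheaf of $\cO_X^{\oplus(j+1)}$ with $c_1=0$ has globally generated dual with trivial determinant. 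Its saturation is therefore a trivial summand, which gives nonzero sections of $M_j$, a contradiction. So $E$ and $F$ are $\mu_L$-stable, and in particular simple.

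\emph{Rigidity and moduli.} The Mukai vector is $v(M_j)=(j+1,-L,\operatorname{ch}_2(M_j)+j+1)$ with $\operatorname{ch}_2(M_j)=-(2j+1-\gamma)$. This gives
\[
v^2=2g-2-2(j+1)(\gamma-j),
\]
and for a simple sheaf $\dim\operatorname{Ext}^1=v^2+2$.
\begin{itemize}
\item If $g=r(r+1)$, then $\gamma=2r$ and $j\in\{r-1,r\}$. Both values give $v^2=-2$, so $E$ and $F$ are rigid.
\item If $g=(r+1)^2$, then $\gamma=2r+1$. For $j=r$ we get $v^2=-2$, so $E$ is rigid. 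For $j=r+1$ we get $v^2=0$.
\end{itemize}
In the last case $v$ is primitive, since $c_1=-L$ is primitive, and $\mu_L$-stability is preserved under deformation. Mukai's theorem then identifies the moduli space of stable sheaves containing $F$ with a smooth projective two-dimensional space, which is a $K3$ surface.
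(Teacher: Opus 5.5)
\textbf{Gap: the identification $\cG_j=i_*A^{\otimes j}$ is never proved.} This is the step you flagged yourself. You need it to hold at every node, i.e.\ no node of $C$ may have its two branches exchanged by the hyperelliptic involution. You suggest extracting this from the Ciliberto--Knutsen construction or from a dimension count, but neither is carried out. Without this step the following all remain unproved: the formula $c_2(M_j)=2j+n$, the values $v^2\in\{-2,0\}$, the rigidity of $E$ (and of $F$ when $g=r(r+1)$), and the moduli statement for $F$. Your worry is legitimate: a node with conjugate branches does make $\cG_j$ have colength one in $i_*A^{\otimes j}$ at that point, so $c_2$ would drop.

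\textbf{How to close it.} You need no further geometric input about $T$, because the numerics force the identity. First note that everything in your stability paragraph holds for the image sheaf $\cG_j$ without assuming surjectivity. Since $\cG_j\subseteq i_*A^{\otimes j}$ is pure of dimension one, $M_j$ is locally free of rank $j+1$. Also $c_1(M_j)=-L$ and $H^0(M_j)=0$. So your argument already gives that $M_j$ is $\mu_L$-stable, hence simple, hence $v(M_j)^2\geq-2$.

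Now keep $\delta_j:=\operatorname{length}(i_*A^{\otimes j}/\cG_j)$ as an unknown. Your Riemann--Roch computation, with $\chi(\cG_j)=\chi(A^{\otimes j})-\delta_j$, gives $c_2(M_j)=2j+n-\delta_j$ and $v(M_j)=(j+1,-L,\gamma-j+\delta_j)$. The inequality $v^2\geq-2$ then reads $(j+1)(\gamma-j+\delta_j)\leq g$. Check the two series:
\begin{itemize}
\item For $j=q$, in both series $(j+1)(\gamma-j)=g$.
\item For $j=q+1$ and $g=r(r+1)$, again $(j+1)(\gamma-j)=g$.
\item For $j=q+1$ and $g=(r+1)^2$, one has $(j+1)(\gamma-j)=g-1$ with $j+1=r+2\geq3$.
\end{itemize}
In every case the inequality forces $\delta_j=0$. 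This gives the stated Chern classes and $v^2\in\{-2,0\}$, after which your rigidity and Mukai arguments go through as you wrote them.

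This is the paper's proof. As a by-product, it shows after the fact that no node of a curve in these families has hyperelliptic-conjugate branches.
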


\begin{proof}
The bundle $M_j$ has no sections, while its dual is generated at the general point and its first Chern class generates $\mathrm{Pic}(X)$. This easily implies stability of $M_j$.

Let
\[
\delta_j=\operatorname{length}(i_*A^{\otimes j}/\cG_j).
\]
The sequence gives
\[
c_1(M_j)=-L,
\qquad
c_2(M_j)=2j+n-\delta_j,
\]
so the Mukai vector is $v(M_j)=(j+1,-L,\gamma-j+\delta_j)$ and
\[
v(M_j)^2=2g-2-2(j+1)(\gamma-j+\delta_j).
\]
Stability implies simplicity, hence $v(M_j)^2\geq-2$, that is $(j+1)(\gamma-j+\delta_j)\leq g$. In both series $(j+1)(\gamma-j)=g$ for $j=q$; for $j=q+1$ it equals $g$ if $g=r(r+1)$ and $g-1$ if $g=(r+1)^2$, in which case $j+1\geq3$. Thus $\delta_j=0$ in all cases, which gives the Chern classes, and $v(M_j)^2$ equals $-2$ or $0$ accordingly. By \cite{mukai} (see also \cite[Chapter 10, Prop. ~2.5]{huybrechtsbook}) a stable bundle with $v^2=-2$ is rigid and unique with its Mukai vector, and the moduli space of stable sheaves with a primitive isotropic Mukai vector is a $K3$ surface.
\end{proof}

Multiplication by a section $s\in H^0(A)$ gives a diagram with exact rows:
\[
\begin{tikzcd}[column sep=large]
0 \arrow{r} & E \arrow[hook]{r}\arrow{d}{\sigma_s} &
\operatorname{Sym}^{q}H^0(A)\otimes\cO_X
\arrow[two heads]{r}\arrow[hook]{d}{\cdot s} &
\cG_q\arrow{r}\arrow[hook]{d}{\cdot s} &0\\
0 \arrow{r} & F \arrow[hook]{r} &
\operatorname{Sym}^{q+1}H^0(A)\otimes\cO_X
\arrow[two heads]{r} &
\cG_{q+1}\arrow{r} &0.
\end{tikzcd}
\]
The right vertical arrow is well defined because multiplication by $s$ sends $\operatorname{Sym}^qH^0(A)$ into $\operatorname{Sym}^{q+1}H^0(A)$, and it is injective because $\cG_q\subseteq i_*A^{\otimes q}$ and $\wC$ is integral. The middle vertical map is injective with cokernel $\cO_X$. The snake lemma gives
\begin{equation}\label{IZsequence}
0\longrightarrow E\xrightarrow{\ \sigma_s\ }F
\longrightarrow\cI_{Z_s}\longrightarrow0,
\qquad
\cO_{Z_s}=\cG_{q+1}/s\,\cG_q,
\end{equation}
and by \autoref{bundlefacts} $\operatorname{length}(Z_s)=c_2(F)-c_2(E)=2$. Away from the nodes of $C$ the sheaf $\cG_j$ agrees with $i_*A^{\otimes j}$, because $A^{\otimes j}$ is base point free. Hence $Z_s=i(\operatorname{Div}(s))$ whenever $\operatorname{Div}(s)$ avoids the preimages of the nodes, and the subschemes $Z_s$ sweep out $C$ as $s$ moves in $|A|$. Thus the hyperelliptic pencil is a line
\[
|A|\subset\bP\Hom(E,F).
\]

For a general morphism $\sigma:E\to F$, injectivity and torsion-freeness of the cokernel are open conditions. Its cokernel has rank one and trivial determinant, so it is an ideal sheaf $\cI_{Z(\sigma)}$. Its Chern class shows directly that $Z(\sigma)$ has length two. We obtain a rational map
\begin{equation}\label{zetamap}
\zeta:\bP\Hom(E,F)\dashrightarrow X^{[2]},
\qquad
[\sigma]\longmapsto Z(\sigma),
\end{equation}
which is regular along $|A|$.

Let $\ell\subset\bP\Hom(E,F)$ be a line along which $\zeta$ is regular. The subschemes $Z(\sigma)$, $\sigma\in\ell$, form a double cover of $\ell\simeq\bP^1$ mapping to $X$. For $\ell=|A|$ this cover is integral and maps birationally onto $C$. Both properties are open, so they hold for a general member of any irreducible family of lines containing $|A|$. The image $D_\ell$ of such a line deforms $C$, hence lies in $|L|$; its normalization is a double cover of $\bP^1$ of genus at least $\gamma$ by semicontinuity, hence equal to $\gamma$ by maximality.

\begin{prop}\label{movingbundleprop}
Assume that two distinct general members of $T$ have isomorphic bundles $E$ and $F$. Then $X$ carries a rational covering family of curves in $|L|$ with hyperelliptic normalizations of geometric genus $\gamma$.
\end{prop}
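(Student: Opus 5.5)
The plan is to exhibit a two-dimensional rational family of lines in $\bP\Hom(E,F)$ that contains hyperelliptic pencils, and then apply the discussion preceding the statement. Fix the pair $(E,F)$. By hypothesis, two distinct general members $C_1,C_2\in T$ have isomorphic bundles. More precisely, I would argue that the general curve of $T$ has the same pair $(E,F)$ as a general curve of a positive-dimensional subfamily. In the rigid case this is automatic. If $F$ moves in a $K3$ moduli space, I would replace $T$ by the fibre $T_{(E,F)}$ of the map $T\dashrightarrow\{\text{pairs}\}$; the hypothesis says this fibre is positive-dimensional. For each $C\in T_{(E,F)}$ the construction above gives a line $|A_C|\subset\bP\Hom(E,F)$ along which $\zeta$ is regular. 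Moreover, the double cover over it maps birationally onto $C$. Distinct curves give distinct lines, since the union of the $Z_s$ sweeps out $C$.

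Consider the irreducible component $\Lambda$ of the Grassmannian of lines in $\bP\Hom(E,F)$ containing the lines $|A_C|$. By the openness argument preceding the statement, a general line $\ell\in\Lambda$ still has $\zeta$ regular along it. Its double cover is integral and maps birationally onto a curve $D_\ell\in|L|$ whose normalization is hyperelliptic of genus $\gamma$. The key step is to produce inside $\Lambda$ a rational surface of such lines whose curves $D_\ell$ cover $X$. I would take the union $W\subset\bP\Hom(E,F)$ of the lines $|A_C|$ for $C$ in a one-parameter subfamily through two given members. Then I would use that any two points of projective space are joined by lines, as follows. Choose $\sigma_1\in|A_{C_1}|$ and $\sigma_2\in|A_{C_2}|$. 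The lines in the plane spanned by $|A_{C_1}|$ and $\sigma_2$ form a $\bP^2$ of lines, and they degenerate to $|A_{C_1}|$. So the general line in this rational two-dimensional family $\Pi\simeq\check\bP^2$ lies in the open set where $\zeta$ is regular and the cover is integral and birational.

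Restricting $\zeta$ to the plane $\langle |A_{C_1}|,\sigma_2\rangle\simeq\bP^2$ then gives a rational map $\bP^2\dashrightarrow X^{[2]}$. The image $P$ is rational provided $\zeta$ is generically injective there. The hyperelliptic curves are the images of lines in this $\bP^2$, and they form a rational two-parameter family. It remains to show that they cover $X$, i.e.\ that the incidence correspondence over this plane dominates $X$. The plane contains the line $|A_{C_1}|$, whose incidence is $C_1$, and the point $\sigma_2$, whose $Z(\sigma_2)$ lies on $C_2\neq C_1$. So the image of the incidence is not contained in $C_1$. If it were a curve, then all lines through $\sigma_2$ would give curves in $|L|$ contained in a fixed curve, which is impossible since they are distinct members sweeping out $C_1$ in the limit. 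Hence it dominates $X$.

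I expect the main obstacle to be exactly this use of the hypothesis. One must ensure that $\zeta$ is nondegenerate on the plane, meaning $Z(\sigma)$ is not constant along lines through $\sigma_2$, and that the lines of the plane fall in the good open subset of $\Lambda$. For this I would first check that $\zeta$ is generically finite on $\bP\Hom(E,F)$ near $|A_{C_1}|$. This reduces to the fact that $\sigma$ is determined up to scalar by its cokernel $\cI_{Z(\sigma)}$, which follows from $\Hom(E,F)\to\Hom(E,\cI_Z)$ and the stability statements of \autoref{bundlefacts}.
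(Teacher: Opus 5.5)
Write $\Pi$ for your plane $\langle|A_{C_1}|,\sigma_2\rangle$ and $\check\Pi$ for its family of lines. Your route differs from the paper's and is viable, but the step that carries the content of the proposition, that the curves $D_\ell$ do not all coincide and hence cover $X$, is not correctly justified.

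In the paper this step is automatic. One takes a rational curve $B$ in the Grassmannian of lines of $\langle|A_{C_1}|,|A_{C_2}|\rangle$ passing through \emph{both} lines, so $C_1$ and $C_2$ are both members of the family. Your $\check\Pi$ contains $|A_{C_2}|$ only when the two lines meet, so you need a separate argument, and the one you give fails. Since $\sigma_2\notin|A_{C_1}|$, the pencil of lines through $\sigma_2$ neither contains nor specializes to $|A_{C_1}|$, so nothing sweeps out $C_1$ ``in the limit''. Moreover, the openness argument controls only a general line of $\check\Pi$. This pencil is a curve in $\check\Pi$ that could a priori lie entirely in the bad locus. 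So you do not even know that lines through $\sigma_2$ give integral double covers mapping birationally onto curves of $|L|$.

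The repair is short. Consider the universal double cover over the regular locus of $\zeta|_\Pi$, an open subset of $\Pi$ containing $|A_{C_1}|$ and $\sigma_2$. It is irreducible, because its restriction to $|A_{C_1}|$ is integral. Hence the closure of its image in $X$ is irreducible and contains $C_1$. If this image were a curve, it would be $C_1$. But it contains $Z(\sigma_2)$, which for general $\sigma_2\in|A_{C_2}|$ is not contained in $C_1$: the set $C_1\cap C_2$ is finite and the $Z_s$ sweep out $C_2$. Alternatively, choose the family as the paper does, so that it contains both lines.

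With this fix your route gives slightly more than the paper's proof: a two-dimensional rational family. Moreover, since the incidence over $\Pi$ dominates $X$, the image $\zeta(\Pi)\subset X^{[2]}$ is a surface dominated by $\bP^2$, hence rational by Castelnuovo. The paper gets its rational surface instead by quotienting by the hyperelliptic involution.

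A few smaller points:
\begin{itemize}
\item The generic injectivity of $\zeta$, which you single out as the main obstacle, is not needed, neither for the proposition nor for rationality of $\zeta(\Pi)$.
\item Your sketch of that injectivity via $\Hom(E,F)\to\Hom(E,\cI_Z)$ only recovers $\sigma$ from a chosen surjection $F\to\cI_Z$. Recovering it from $Z$ alone would require $\hom(F,\cI_Z)=1$.
\item The hypothesis says only that the fibre $T_{(E,F)}$ has at least two points, not that it is positive-dimensional. You only use two members, so this is harmless.
\end{itemize}
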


\begin{proof}
The two curves give two lines in $\bP\Hom(E,F)$, which are distinct because the subschemes parametrized by each line sweep out the corresponding curve. Let $\Lambda$ be their span. The Grassmannian of lines in $\Lambda$ is rational, so it contains a rational curve $B$ through the two lines. By the discussion above, a general $\ell\in B$ gives a curve $D_\ell\in|L|$ with hyperelliptic normalization of geometric genus $\gamma$. The family $\{D_\ell\}_{\ell\in B}$ contains two distinct curves, so it is nonconstant and covers $X$.
\end{proof}

\begin{proof}[Proof of \autoref{primitivethm}]
By \autoref{bundlefacts}  the bundle $E$ is rigid and, therefore, the same for all members of $T$. If $g=r(r+1)$ the same holds for $F$, and \autoref{movingbundleprop} applies; this is the plane construction of \cite[Prop.~7.2]{flaminiknutsenpacienza}.

If $g=(r+1)^2$, the bundle $F_t$ attached to $t\in T$ moves in a $K3$ surface $\cM$. Consider the classifying map
\[
T\dashrightarrow\cM,
\qquad t\longmapsto F_t.
\]
If it is not generically injective, two distinct general members of $T$ share $F$, and $E$ as well, so \autoref{movingbundleprop} applies. If it is generically injective, it is birational, since $T$ and $\cM$ are irreducible surfaces. Rational curves are dense in $\cM$ \cite{cgl}; choose one meeting the open set on which the inverse map is defined. It gives a nonconstant rational family of the original hyperelliptic curves, which therefore covers $X$.
\end{proof}

The rational family of hyperelliptic curves is a double cover of a rational surface (taking the quotient by the fiberwise hyperelliptic involution) and induces a rational surface in $X^{[2]}$.
\section{Applications and questions}\label{sec:consequences}

\begin{proof}[Proof of \autoref{irrCor}]
	Let $P\subset X^{[2]}$ be a rational surface supplied by \autoref{mainthm}. Resolving $P$ and the restricted universal family, we obtain a double cover $\widetilde P'\to\widetilde P$ of a smooth rational surface and a dominant, generically finite evaluation map $\widetilde P'\dashrightarrow X$; in particular, $X$ is dominated by a double cover of $\bP^2$. This double cover is connected: otherwise a rational component would dominate $X$, and the $K3$ surface $X$ would be uniruled.
	
	Let $x_1,\ldots,x_p\in X$ be general points, with $p\geq2$, choose lifts $y_i\in\widetilde P'$, and let $u_i\in\widetilde P$ be their images. Through general points of a rational surface there pass rational curves of sufficiently high degree in a family with no base points outside the $u_i$; a general such curve $R$ through $u_1,\ldots,u_p$ meets the branch divisor transversely and avoids the exceptional and non-finite loci. Its inverse image is then an integral double cover of $R\simeq\bP^1$, which, $R$ being general, maps birationally onto its image in $X$. This image passes through $x_1,\ldots,x_p$, and it is neither rational nor elliptic since $p\geq2$; hence it is a hyperelliptic curve.
\end{proof}
{\bf Question.} Are these curves nodal? What is the minimal geometric genus of a hyperelliptic curve through $p$ general points?

\begin{proof}[Proof of \autoref{arithmeticIntro}]
	In all cases of the proof of \autoref{A}, the rational surface $P$ (or its strict transform, which is not contained in the flopping locus) is a rational Lagrangian surface in $M=X^{[2]}$, which is defined over $k$. Since rational Lagrangian surfaces of a given cohomology class in $M$ form a finite set (positive-dimensional families of Lagrangian subvarieties have irregular general member), $P$ is defined over $\bar k$, hence over a finite extension of $k$. Being geometrically rational, $P$ becomes rational over a further finite extension, so its rational points are Zariski dense. Each of them gives a length-two subscheme of $X$ defined over that field, supported on points of degree at most two, and these supports are dense in $X$ because the universal family over $P$ dominates $X$.
\end{proof}

{\bf Problem.} Give an interesting upper bound on the degree of the extension $k\subset k'$.
\subsection{Theorem D}
We first record the elementary part of the Beauville--Voisin argument that will be used in the incidence calculation.

\begin{lem}\label{divisordiagonal}
	Assume $\Pic(X)=\bZ L$. Then
	\begin{equation}\label{divisordiagonalformula}
		\Delta_{X*}L=L\times c_X+c_X\times L
		\qquad\text{in }\CH^3(X\times X).
	\end{equation}
\end{lem}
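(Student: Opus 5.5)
The identity is the Beauville--Voisin relation for divisors, and I will prove it with the standard trick: restrict $\Delta_{X*}L$ to $C\times C$ for a rational curve $C\in|L|$, and use the fact that rational curves carry only one point class.

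\textbf{Reduction to a curve.} By the Bogomolov--Mumford theorem, as in \cite[Cor.~13.1.5]{huybrechtsbook}, $L$ is linearly equivalent to a sum $\sum_i n_iC_i$ of integral rational curves. Since $\Pic(X)=\bZ L$ and $L$ is primitive, this sum is a single integral rational curve $C\in|L|$, exactly as in the square case of \autoref{sec:mainproof}. Then $\Delta_{X*}L=\Delta_{X*}[C]$ is the class of the diagonal of $C$, pushed forward via $C\times C\to X\times X$. So it suffices to compute the class of $\Delta_C\subset C\times C$ after pushforward.

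\textbf{Computation on the normalization.} Let $\nu:\bP^1\to C$ be the normalization and $j=(\nu\times\nu):\bP^1\times\bP^1\to X\times X$. The diagonal cycle of $C$ is $j_*[\Delta_{\bP^1}]$, because $\nu$ is birational onto $C$. In $\CH^1(\bP^1\times\bP^1)$ we have
\[
[\Delta_{\bP^1}]=[\bP^1\times pt]+[pt\times\bP^1].
\]
Pushing forward gives $j_*[\bP^1\times pt]=[C]\times[\nu(pt)]$. Every point of $C$ is a point on a rational curve, so its class is $c_X$ by definition. Hence $j_*[\bP^1\times pt]=L\times c_X$, and symmetrically $j_*[pt\times\bP^1]=c_X\times L$. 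Summing the two gives \eqref{divisordiagonalformula}, and the whole argument takes place in $\CH^3(X\times X)$ with integral coefficients.

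\textbf{Main obstacle.} The only point needing care is the first step: the class of $\Delta_{X*}L$ must equal the pushforward of the diagonal of an integral rational curve, with no multiplicities. This is where the hypothesis $\Pic(X)=\bZ L$ with $L$ primitive enters, since it forces the Bogomolov--Mumford sum to consist of a single integral curve. Without that hypothesis one would have to handle sums of rational curves, which still works by linearity but then requires $L$ to be expressed in terms of them.

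Two smaller checks are needed. First, $\Delta_{X*}$ of a linear equivalence class is well defined, so replacing $L$ by $[C]$ is harmless. Second, the pushforward of the diagonal cycle of $\bP^1$ is the diagonal cycle of $C$, which holds because $\nu$ is birational onto $C$.
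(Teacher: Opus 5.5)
Your proposal is correct and follows the paper's own proof: you take an integral rational curve in $|L|$ via Bogomolov--Mumford and push forward $[\Delta_{\bP^1}]=[\bP^1\times \mathrm{pt}]+[\mathrm{pt}\times\bP^1]$ along the normalization. You also spell out two points the paper leaves implicit, namely that $\Pic(X)=\bZ L$ forces the Bogomolov--Mumford sum to be a single integral curve and that $\nu$ is birational onto its image.
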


\begin{proof}
	The Bogomolov--Mumford theorem gives a rational curve $R\in |L|$; see \cite[Chapter 13 Thm.~1.1]{huybrechtsbook}.
	Push forward the identity
	\[
	\Delta_{\bP^1}=\bP^1\times p+p\times\bP^1
	\]
	along the normalization of $R$. This gives
	\[
	\Delta_{X*}R=R\times c_X+c_X\times R,
	\]
	which is \eqref{divisordiagonalformula}.
\end{proof}

The proof goes along the lines of \cite{bazhov}, to which the reader may compare.

\begin{proof}[Proof of \autoref{diagonalIntro}]
We assume $X$ of Picard rank $1$, the argument for the general $K3$ follows by a specialization argument.
	Let $P\subset X^{[2]}$ be a rational surface given by \autoref{mainthm}, let $S\to P$ be a resolution and let
	\[
	\mathcal Z\in\CH^2(S\times X)
	\]
	be the pullback of the universal family. It is finite flat of degree two over $S$. Denote by
	\[
	e:\mathcal Z\longrightarrow X
	\]
	the evaluation map and by $N>0$ its degree.
	
	Since $\CH_0(S)=\bZ$, the fibre $\mathcal Z_s$ has a fixed class
	\[
	\eta\in\CH_0(X),\qquad \deg(\eta)=2.
	\]
	We identify this class before computing the incidences. Give the image of the non-reduced locus its natural cycle structure
	\[
	C_P:=q_*\bigl(p^*[P]\cdot E_\Delta\bigr)\in\CH_1(X).
	\]
	Since $p^*E_{\mathrm{HC}}=2E_\Delta$, this is the image of $P\cap E_{\mathrm{HC}}$ with its natural multiplicities. It is a non-zero effective curve: indeed $C_P\cdot L=[P]\cdot h\delta$, which is positive for the surfaces of \autoref{mainthm} by a direct computation with their classes. If a point of this intersection represents a non-reduced subscheme supported at $x$, then
	$\eta=2[x].$
	Therefore $2([x]-[y])=0$ for points in the same component of $C_P$. Roitman's theorem \cite{roitman} implies that each component is a constant-cycle curve. An ample divisor supported on rational curves meets every component of $C_P$, and hence $\eta=2c_X$. In particular,
	\begin{equation}\label{diagonaleta}
		\Delta_{X*}\eta=2\,c_X\times c_X.
	\end{equation}
	From now on we write $c_i:=\operatorname{pr}_i^*c_X$ and $L_i:=\operatorname{pr}_i^*L$ on $X^3$.
	
	We now compute the two incidence cycles.
	Since $S$ is a smooth projective rational surface and $\Pic(X)=\bZ L$, the following (Künneth) decomposition holds:
	\[
	\CH^2(S\times X)=\operatorname{pr}_X^*\CH^2(X)\ \oplus\ \bigl(\Pic(S)\times L\bigr)\ \oplus\ [s_0]\cdot\operatorname{pr}_X^*\CH^0(X),
	\]
	where $s_0\in S$ is any point. Hence there is a unique divisor $D$ on $S$ such that
	\begin{equation}\label{universalTatedecomposition}
		[\mathcal Z]
		=[S]\times\eta
		+D\times L
		+N[s_0]\times[X]
		\qquad\text{in }\CH^2(S\times X).
	\end{equation}
	The last coefficient is $N$ because pushing forward to $X$ gives $N[X]$. The first coefficient is $\eta$: intersecting with $\{s\}\times X$ and pushing forward to $X$ kills the other two summands and gives the class of $\mathcal Z_s$. Put $\mu:=D\cdot D\in\bZ$.
	
	On $S\times X^2$, let $\mathcal Z_i$ be the pullback of $\mathcal Z$ from the $i$-th factor and define
	\[
	I_2:=(\operatorname{pr}_{X^2})_*(\mathcal Z_1\cdot\mathcal Z_2).
	\]
	Expanding \eqref{universalTatedecomposition} and retaining the terms of codimension two on $S$ gives
	\begin{equation*}\label{I2Tate}
		I_2=N(X\times\eta+\eta\times X)+\mu\,L\times L.
	\end{equation*}
	Over the open set where $\mathcal Z\to S$ is an \'etale double cover, its relative square is the union of the diagonal and the graph of the deck involution. If $\Sigma\in\CH^2(X\times X)$ denotes the push-forward of the second component, then
	\begin{equation*}\label{I2geometric}
		I_2=N\Delta_X+\Sigma.
	\end{equation*}
	Thus
	\begin{equation}\label{Sigmadecomposition}
		\Sigma=N(X\times\eta+\eta\times X-\Delta_X)+\mu\,L\times L.
	\end{equation}
	
	The third incidence contains the small diagonal. On $S\times X^3$ set
	\[
	I_3:=(\operatorname{pr}_{X^3})_*(\mathcal Z_1\cdot\mathcal Z_2\cdot\mathcal Z_3).
	\]
	The same expansion gives\footnote{Here and below, $\sum_{\mathrm{cyc}}$ denotes the sum over the three cyclic permutations of the factors of $X^3$; for instance, $\sum_{\mathrm{cyc}}c_2c_3=c_2c_3+c_3c_1+c_1c_2$.}
	\begin{equation}\label{I3Tate}
		I_3=4N\sum_{\mathrm{cyc}}c_2c_3
		+2\mu\sum_{\mathrm{cyc}}c_1L_2L_3.
	\end{equation}
	The relative cube of a double cover has four components: one where all three points agree and three where exactly two agree. Consequently
	\begin{equation}\label{I3geometric}
		I_3=N\Delta_{123}
		+\delta_{12*}\Sigma
		+\delta_{13*}\Sigma
		+\delta_{23*}\Sigma,
	\end{equation}
	where $\delta_{12}(x,y)=(x,x,y)$, $\delta_{13}(x,y)=(x,y,x)$ and $\delta_{23}(x,y)=(x,y,y)$. These cycle identities may be checked over the \'etale locus; the ramification locus produces no additional two-dimensional component.
	
	We compute $\delta_{ij*}\Sigma$ term by term from \eqref{Sigmadecomposition}: $\delta_{12*}(X\times\eta)=2\Delta_{12}\cdot c_3$, $\delta_{12*}\Delta_X=\Delta_{123}$, $\delta_{12*}(\eta\times X)=2c_1c_2$ by \eqref{diagonaleta}, and $\delta_{12*}(L\times L)=c_1L_2L_3+L_1c_2L_3$ by \autoref{divisordiagonal}. Summing over the three pairs gives
	\[
	\sum_{i<j}\delta_{ij*}\Sigma
	=2N\sum_{\mathrm{cyc}}\Delta_{12}\cdot c_3+2N\sum_{\mathrm{cyc}}c_2c_3-3N\Delta_{123}+2\mu\sum_{\mathrm{cyc}}c_1L_2L_3.
	\]
	Substituting into \eqref{I3geometric} and comparing with \eqref{I3Tate}, the terms involving $\mu$ cancel, and we obtain
	\begin{equation}\label{beforeThetacancellation}
		2N\Bigl([\Delta_{123}]-\sum_{\mathrm{cyc}}[\Delta_{12}]\cdot c_3+\sum_{\mathrm{cyc}}c_2c_3\Bigr)=0
		\qquad\text{in }\CH^4(X^3).
	\end{equation}
	Dividing by $2N$ proves the stated decomposition of the small diagonal.
\end{proof}
\begin{rem} \label{remark}
	The argument proving \autoref{diagonalIntro} only divides by $2N$, where $N$ is the degree of the incidence correspondence over $P$; all other steps hold with integral coefficients (up to replacing $\eta$ with $2c_X$). Hence the Beauville--Voisin relation holds in $\CH^4(X^3)$ up to $2N$-torsion, and up to $2\gcd_P N_P$-torsion when several surfaces $P$ are available. The reader may consult \cite{oberdieck,hassetttschinkelcones,bakkerplanes}  for the cohomology class of a Lagrangian plane from which the following sketched computation can be deduced. For a Lagrangian plane whose lines meet $E_{\mathrm{HC}}$ in $2t$ points one finds $N=(t^2-1)/8$. If $L^2=2$, the plane given by the fibres of the double cover has $N=1$. If $s\equiv 9,15\pmod{24}$ and $L^2=\tfrac52(5s^2-1)$, then $X^{[2]}$ contains a Lagrangian plane with $t=5s$ and incidence degree $\frac{25s^2-1}{8}$, and the surfaces of \autoref{mainthm} coming from the divisorial contraction have incidence degree coprime to it (see also \autoref{coprimm}). In all these cases the Beauville--Voisin decomposition of the small diagonal holds in $\CH^4(X^3)$ up to $2$-torsion.  The search of the degrees for which there are two correspondences of coprime degrees was done by AI.
\end{rem}

{\bf Question.} Does the decomposition of \autoref{diagonalIntro} hold with coefficients in $\mathbb Z [\frac{1}{2}]$ for any $K3$ surface?

\end{document}